\newtheorem{lemma}{Lemma}
\newtheorem{theorem}{Theorem} 
\newtheorem{definition}{Definition}
\newtheorem{claim}{Claim}
\newtheorem{note}{Note}
\newcommand{\A}{\mathcal{A}}
\newcommand{\B}{\mathcal{B}}
\newcommand{\C}{\mathcal{C}}
\newcommand{\D}{\mathcal{D}}
\newcommand{\U}{\mathcal{U}}
\newcommand{\I}{\mathcal{I}}
\newcommand{\T}{\mathcal{T}}
\newcommand{\M}{\mathcal{M}}
\newcommand{\F}{\mathcal{F}}
\newcommand{\G}{\mathcal{G}}
\newcommand{\h}{\mathcal{H}}
\newcommand{\LL}{\mathcal{L}}
\newcommand{\abs}[1]{\left\lvert{#1}\right\rvert}
\newcommand{\floor}[1]{\left\lfloor{#1}\right\rfloor}
\DeclareMathOperator{\La}{La}
\DeclareMathOperator{\ave}{ave}
\title{Intersecting P-free families}
\author
{
D\'aniel Gerbner \thanks{Alfr\'ed R\'enyi Institute of Mathematics, Hungarian Academy of Sciences  e-mail: gerbnerd@gmail.com}
\and
Abhishek Methuku \thanks{Central European University, Budapest, Hungary e-mail: abhishekmethuku@gmail.com}
\and 
Casey Tompkins \thanks{Central European University, Budapest, Hungary} \thanks{Alfr\'ed R\'enyi Institute of Mathematics, Hungarian Academy of Sciences e-mail: ctompkins496@gmail.com}
}
\begin{document}
\maketitle

\begin{abstract}
We study the problem of determining the size of the largest intersecting $P$-free family for a given partially ordered set (poset) $P$.  In particular, we find the exact size of the largest intersecting $B$-free family where $B$ is the butterfly poset and classify the cases of equality. The proof uses a new generalization of the partition method of Griggs, Li and Lu. We also prove generalizations of two well-known inequalities of Bollob\'{a}s and Greene, Katona and Kleitman in this case. Furthermore, we obtain a general bound on the size of the largest intersecting $P$-free family, which is sharp for an infinite class of posets originally considered by Burcsi and Nagy, when $n$ is odd. Finally, we give a new proof of the bound on the maximum size of an intersecting $k$-Sperner family and determine the cases of equality.
\end{abstract}

\section{Introduction}
We denote the set $\{1,2,\dots,n\}$ by $[n]$ and the power set of $[n]$ by $2^{[n]}$.  The family of all $k$-element subsets of $[n]$ is denoted by $\binom{[n]}{k}$.  We refer to  $\binom{[n]}{k}$ as the $k^{th}$ \emph{level} in $2^{[n]}$.  A collection $\F \subseteq 2^{[n]}$ is called an \emph{antichain} if there do not exist $F,G\in\F$ with $F \subset G$.   Let $P$ and $Q$ be partially ordered sets (posets).  Then, $P$ is said to be a \emph{subposet} of $Q$ if there exists an injection $\phi$ from $P$ to $Q$ such that $x < y$ in $P$ implies $\phi(x) < \phi(y)$ in $Q$.  Note, importantly, that the implication is only required in one direction.  

The starting point for all forbidden poset problems is the well-known theorem of Sperner~\cite{sperner1928satz}:

\begin{theorem}[Sperner \cite{sperner1928satz}]
Let $\F \subseteq 2^{[n]}$ be an antichain, then
\begin{displaymath}
\abs{\F} \le \binom{n}{\floor{\frac{n}{2}}}.
\end{displaymath}
Moreover, equality occurs if and only if $\F$ is a level of maximum size in $2^{[n]}$.
\end{theorem}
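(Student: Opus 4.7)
The plan is to prove the bound by the Lubell-Yamamoto-Meshalkin (LYM) approach of double-counting maximal chains. A maximal chain in $2^{[n]}$ is a sequence $\emptyset = C_0 \subset C_1 \subset \cdots \subset C_n = [n]$ with $|C_i|=i$; there are exactly $n!$ such chains (in bijection with permutations of $[n]$), and a fixed set $A$ of size $k$ lies in precisely $k!\,(n-k)!$ of them. Because $\F$ is an antichain, no maximal chain can contain two distinct members of $\F$, since these would be comparable. Double-counting the pairs (chain, member of $\F$ on the chain) therefore gives
\[
\sum_{A\in\F}|A|!\,(n-|A|)!\;\le\; n!,
\]
which after dividing by $n!$ yields the LYM inequality $\sum_{A\in\F}1/\binom{n}{|A|}\le 1$. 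Since $\binom{n}{k}$ is maximized at $k=\lfloor n/2\rfloor$, replacing each denominator by this maximum gives $|\F|\le \binom{n}{\lfloor n/2\rfloor}$.

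For the equality case, suppose $|\F|=\binom{n}{\lfloor n/2\rfloor}$. Tracing back the inequalities, every $A\in\F$ must satisfy $\binom{n}{|A|}=\binom{n}{\lfloor n/2\rfloor}$, so $|A|\in\{\lfloor n/2\rfloor,\lceil n/2\rceil\}$, and LYM must hold with equality. When $n$ is even the two middle levels coincide, and the equality $\sum_{A\in\F} 1/\binom{n}{n/2}=1$ forces $\F$ to be exactly the middle level.

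The main obstacle is the odd case $n=2m+1$, where both levels $m$ and $m+1$ are middle, and one must rule out genuine mixing. Write $\G=\F\cap\binom{[n]}{m}$ and $\h=\F\cap\binom{[n]}{m+1}$. LYM equality means that every maximal chain meets $\F$ in exactly one point; focusing on the two middle sets $C_m\subset C_{m+1}$ of a chain, this translates to: for every such pair either $C_m\in\G$ or $C_{m+1}\in\h$. Combined with the antichain condition forbidding both simultaneously, this yields a propagation argument: if some $m$-set $A_0$ is absent from $\G$, then each superset $A_0\cup\{x\}$ must lie in $\h$, hence every $m$-subset of such a superset is absent from $\G$, and this in turn forces their own $(m+1)$-supersets into $\h$. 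Since the bipartite inclusion graph between the two middle levels is connected, the cascade forces $\G=\emptyset$ or $\G=\binom{[n]}{m}$; the antichain condition then pins down $\h$ as the complementary level, so $\F$ coincides with one of the two middle levels, completing the classification.
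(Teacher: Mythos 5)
The paper only cites Sperner's theorem as classical background and gives no proof, so there is nothing internal to compare against; judged on its own, your argument is correct. The LYM double count is the standard route to the bound, and your treatment of the equality case is sound: LYM equality forces every maximal chain to meet $\F$ exactly once, which for odd $n=2m+1$ gives, for each pair $A\subset B$ with $|A|=m$, $|B|=m+1$, that exactly one of $A\in\G$, $B\in\h$ holds (at least one from chain-saturation, at most one from the antichain condition), and the resulting propagation together with the connectivity of the bipartite inclusion graph between the two middle levels correctly pins $\F$ to a single full level.
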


Observe that every collection $\F \subseteq 2^{[n]}$ may itself be viewed as a poset under the containment relation.  A $k$-chain, denoted by $P_k$, is defined to be the poset on the set $\{x_1,x_2,\dots,x_k\}$ with the relations $x_1< x_2<\dots < x_k$.  Sperner's theorem is equivalent to the statement that the size of a collection $\F \subseteq 2^{[n]}$ containing no 2-chain as a subposet is at most $\binom{n}{\floor{\frac{n}{2}}}$.

An important generalization of Sperner's theorem, due to Erd\H{o}s~\cite{erdos1945lemma}, determines the size of the largest family containing no $(k+1)$-chain.  Such a family is called $k$-\emph{Sperner}.  We use the notation $\Sigma(n,k)$ to denote the sum of the $k$ largest binomial coefficients of the form $\binom{n}{i}$,  $0\le i \le n$.

\begin{theorem}[Erd\H{o}s \cite{erdos1945lemma}]
Let $\F \subseteq 2^{[n]}$ be  $k$-Sperner, then
\begin{displaymath}
\abs{\F} \le \Sigma(n,k).
\end{displaymath}
Moreover, equality occurs if and only if $\F$ is the union of $k$ of the largest levels in $2^{[n]}$.
\end{theorem}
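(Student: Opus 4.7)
The plan is to use the symmetric chain decomposition (SCD) of the Boolean lattice $2^{[n]}$, in the spirit of the standard proofs of Sperner's theorem. I would first recall that $2^{[n]}$ partitions into \emph{symmetric chains} $C_1,\dots,C_m$; a symmetric chain of length $l$ contains exactly one set at each of $l$ consecutive levels centered about $n/2$, and for each $i\le \lfloor n/2\rfloor$ there are precisely $\binom{n}{i}-\binom{n}{i-1}$ chains of length $n-2i+1$ (with the convention $\binom{n}{-1}=0$).

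Since $\F$ is $k$-Sperner and each $C_j$ is itself a chain, $|\F\cap C_j|\le \min(|C_j|,k)$ for every $j$. Summing over the SCD yields
\[
|\F|\le \sum_{j=1}^{m}\min(|C_j|,k),
\]
and I would then show this sum equals $\Sigma(n,k)$ by the structural observation that the union of the $k$ largest levels of $2^{[n]}$ meets each symmetric chain of length $l$ in exactly $\min(l,k)$ sets. This simultaneously certifies the upper bound and exhibits a family attaining it, settling the first half of the theorem.

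The equality case is what I expect to be the main obstacle. If $|\F|=\Sigma(n,k)$ then in every SCD, $|\F\cap C|=\min(|C|,k)$; in particular, every chain of length at most $k$ lies wholly in $\F$, while every longer chain contributes exactly $k$ of its elements. To deduce that $\F$ is a union of complete levels, I would exploit the fact that the SCD is very far from unique: given sets $A, B$ at the same level within the central $k$ levels with $A\in\F$ and $B\notin\F$, one should be able to construct an SCD in which $A$ and $B$ occupy analogous positions in chains of the same length, forcing a contradiction with the required intersection pattern. A complementary argument, perhaps using the natural involution $A\mapsto [n]\setminus A$ and a careful accounting of which sets can appear in short chains across different SCDs, should then rule out sets at levels outside the central range. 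Once $\F$ is known to be a union of full levels, the equality $|\F|=\Sigma(n,k)$ combined with the $k$-Sperner property identifies these as $k$ of the largest levels, with the expected ambiguity when there are ties among the binomial coefficients $\binom{n}{i}$.
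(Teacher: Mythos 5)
This is a classical theorem of Erd\H{o}s that the paper only cites, so there is no in-paper proof to compare against; I am judging your argument on its own. The upper bound half is correct and complete: the symmetric chain decomposition exists with the profile you state, $|\F\cap C_j|\le\min(|C_j|,k)$ because each $C_j$ is a chain, and $\sum_j\min(|C_j|,k)=\Sigma(n,k)$ because a suitable block of $k$ middle levels meets each symmetric chain of length $l$ in exactly $\min(l,k)$ sets. This is a standard and perfectly valid route to the inequality.

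The equality characterization, however, is a plan rather than a proof, and the steps you defer (``one should be able to construct an SCD\dots'', ``should then rule out\dots'') are exactly where the work lies. Two comments. First, the inner levels can be handled more cleanly than by your $A$-versus-$B$ swap: equality forces every chain of length at most $k$, in \emph{every} SCD, to lie entirely in $\F$, and for any set $A$ at a level $j$ with $|n-2j|+1\le k$ there is an SCD in which $A$ lies on a chain of length exactly $|n-2j|+1$ (permute a chain of that length in the standard decomposition so that its unique level-$j$ member becomes $A$); hence all such sets belong to $\F$. Second --- and this is the genuine gap --- when $n\equiv k\pmod 2$ this argument captures only the central $k-1$ levels, and $\F$ must contain $\binom{n}{(n+k)/2}$ further sets drawn from levels $\le (n-k)/2$ or $\ge (n+k)/2$. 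One must show these extra sets constitute one entire boundary level, excluding both mixtures of the two boundary levels and sets lying further out. The tools are available: if $A\in\F$ at level $(n-k)/2$ and $B\in\F$ at level $(n+k)/2$ with $A\subset B$, then $A$, $B$ and the fully included intermediate levels produce a forbidden $(k+1)$-chain; and every chain of length at least $k+1$ in every SCD contains exactly one member of $\F$ outside the central $k-1$ levels, which tightly constrains where the extra sets can sit. But assembling these observations into a proof that the extra sets form exactly one full level is the substantive content of the equality case, and it is missing from your write-up.
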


The general study of forbidden poset problems was initiated in the paper of Katona and Tarj\'an~\cite{katona1983extremal}.   They determined the size of the largest family of sets containing neither a $V$ (the poset on $\{x,y,z\}$ with relations $x < y,z$) nor a $\Lambda$ (the poset on $\{x,y,z\}$ with relations $x,y < z$).  They also gave an estimate on the maximum size of $V$-free families which we will make use of.

\begin{theorem}[Katona, Tarj\'an \cite{katona1983extremal}]   
\label{thm:KT}
Assume that $\F \subseteq 2^{[n]}$ contains no $V$ as a subposet, then
\begin{displaymath}
\abs{\F} \le \left(1 + \frac{2}{n}\right)\binom{n}{\floor{\frac{n}{2}}}.
\end{displaymath}
\end{theorem}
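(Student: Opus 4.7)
The plan is to first extract the rigid structure that $V$-freeness forces on $\F$, and then run a chain-averaging argument with a refinement that shaves a factor of $n/2$ off the crude LYM bound.

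The key structural observation is that in a $V$-free family $\F$, every $A \in \F$ admits at most one proper superset in $\F$: otherwise $A$ together with any two such supersets realizes $V$ as a subposet. This also rules out three-element chains in $\F$, since $A \subsetneq B \subsetneq C$ all in $\F$ would give $A$ two distinct supersets $B$ and $C$. Consequently $\F$ decomposes as $\F = \M \sqcup \mathcal{N}$, where $\M$ consists of the maximal elements of $\F$ (an antichain), and for each $A \in \mathcal{N}$ its unique superset $p(A) \in \M$ is well-defined. A symmetric argument shows that $\mathcal{N}$ is itself an antichain. Two applications of Sperner's theorem already give the crude bound $|\F| \le 2\binom{n}{\lfloor n/2 \rfloor}$.

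To sharpen this, I would average over uniformly random maximal chains in $2^{[n]}$. Each such chain meets $\F$ in at most two sets, and in exactly two precisely when it passes through a pair $(A, p(A))$ with $A \in \mathcal{N}$. Letting $D$ be the number of chains containing two $\F$-sets, the standard manipulation yields
\[
|\F| \;\le\; \binom{n}{\lfloor n/2 \rfloor}\left(1 + \frac{D}{n!}\right), \qquad D \;=\; \sum_{A\in\mathcal{N}} |A|!\,(|p(A)|-|A|)!\,(n-|p(A)|)!,
\]
so the theorem reduces to showing $D/n! \le 2/n$.

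I expect this final inequality to be the main obstacle. The two-step LYM approach -- first bound $\sum_{A : p(A) = B}\binom{|B|}{|A|}^{-1} \le 1$ by Sperner inside the Boolean lattice $2^B$, then apply LYM to the antichain $\M$ -- only yields $D/n! \le 1$, which just recovers the crude bound. The missing factor of $n/2$ should come from using that each child $A$ is a \emph{proper} subset of $p(A)$, so the level gap $|p(A)|-|A|\ge 1$ makes the multinomial factor strictly smaller than the extremal LYM contribution. Concretely, I would partition $\mathcal{N}$ by the value of $|p(A)|-|A|$ and bound each slice via a weighted LYM inequality on $\M$ that penalizes children lying close to the middle level; a preliminary shifting or compression argument reducing to the case where every $A \in \mathcal{N}$ sits exactly one level below $p(A)$ -- where the computation becomes fully explicit -- should then clinch the proof.
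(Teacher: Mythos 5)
The paper does not actually prove Theorem~\ref{thm:KT} --- it is quoted from Katona and Tarj\'an and used as a black box --- so your argument has to stand on its own. Your structural analysis is correct: $V$-freeness forces each set to have at most one proper superset in $\F$, hence no $3$-chains, hence the decomposition $\F=\M\sqcup\mathcal{N}$ into two antichains with a well-defined parent map $p$, and your formulas for $D$ and for the bound $\abs{\F}\le\binom{n}{\floor{n/2}}(1+D/n!)$ are also correct. The fatal step is ``the theorem reduces to showing $D/n!\le 2/n$'': that inequality is simply false for $V$-free families. Take $\F=\{[n]\}\cup\{[n]\setminus\{i\}:i\in[n]\}$. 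This is $V$-free (each co-singleton has $[n]$ as its unique proper superset), and every maximal chain contains exactly two members of $\F$, so $D=n!$ and $D/n!=1$. The same example exposes the deeper problem: your chain of inequalities passes through $\abs{\F}/\binom{n}{\floor{n/2}}\le\sum_{F\in\F}\binom{n}{\abs{F}}^{-1}=(c+D)/n!$ (with $c$ the number of chains meeting $\F$), and this Lubell-type sum equals $2$ for the family above (and already for $\{\varnothing,[n]\}$). So no refinement of the LYM bookkeeping alone can beat the crude bound $2\binom{n}{\floor{n/2}}$; the constant $2$ is genuinely attained at the level of the Lubell function.

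Your proposed repairs do not close this gap. The counterexample above already has every child exactly one level below its parent, so a compression to the case $\abs{p(A)}-\abs{A}=1$ (even if it could be justified, which is itself unclear since compressions can create new copies of $V$) would not help. Moreover, the suggested penalty for children ``close to the middle level'' points in the wrong direction: for a child $A$ one level below its parent, the chains through both constitute a fraction $1/(n-\abs{A})$ of the chains through $A$, which is about $2/n$ precisely when $A$ is near the middle; it is the children \emph{far} from the middle (such as the co-singletons above) that inflate $D$ while contributing almost nothing to $\abs{\F}$. Any correct proof must therefore use the set sizes beyond the Lubell function --- for instance, treating sets far from level $\floor{n/2}$ separately, or converting each parent--child incidence into a controlled number of additional sets via an injection or normalized-matching argument --- rather than reducing to a uniform bound on $D/n!$.
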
 
The following function is the main object of study in forbidden poset problems:

\begin{displaymath}
\La(n,P) = \max_{\F \subseteq 2^{[n]}} \{\abs{\F}: \F \mbox{ does not contain } P \mbox{ as a subposet}\}.
\end{displaymath}

The value of $\La(n,P)$ has been determined or estimated for a variety of posets $P$. The butterfly poset, $B$, is defined on the set $\{w,x,y,z\}$ with relations $w,x < y,z$.  Of central importance to the present paper is a result of De Bonis, Katona and Swanepoel~\cite{de2005largest} which gave the exact result for $\La(n,B)$.

\begin{theorem}[De Bonis, Katona, Swanepoel \cite{de2005largest}]
\label{thm:DKS}
\begin{displaymath}
\La(n,B) = \Sigma(n,2).
\end{displaymath}
Moreover, equality holds if and only if the family is the union of two of the largest levels in $2^{[n]}$.  
\end{theorem}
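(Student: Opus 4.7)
For the lower bound $\La(n,B) \ge \Sigma(n,2)$, I would verify that the family $\F^{*} = \binom{[n]}{k} \cup \binom{[n]}{k+1}$, for $k$ chosen so that these two consecutive levels are the largest, is $B$-free. A short case check on set sizes shows that any four distinct sets $W,X,Y,Z\in\F^{*}$ with $W,X\subseteq Y,Z$ must have $|W|=|X|=k$ and $|Y|=|Z|=k+1$; then $W\cup X\subseteq Y\cap Z$ with $|W\cup X|\ge k+1$ (as $W\ne X$) forces $Y=W\cup X=Z$, contradicting $Y\ne Z$.

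For the upper bound I would exploit the structure forced by $B$-freeness. Partition $\F$ into its minimal elements $\F_{\mathrm{min}}$, maximal elements $\F_{\mathrm{max}}$, and the remaining \emph{middle} sets $\F_{\mathrm{mid}}$. Two key consequences of the $B$-free hypothesis: first, $\F_{\mathrm{mid}}$ must be an antichain, since any comparable pair $F\subsetneq F'$ with $F,F'\in\F_{\mathrm{mid}}$, together with $H\in\F$ strictly below $F$ and $G\in\F$ strictly above $F'$, would form the butterfly $\{H,F,F',G\}$; second, each $F\in\F_{\mathrm{mid}}$ has exactly one element of $\F$ strictly below it and exactly one strictly above it, because two distinct $H_1,H_2\subsetneq F$ in $\F$ together with any $G\supsetneq F$ in $\F$ already give the butterfly $\{H_1,H_2,F,G\}$. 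These two sets must in fact lie in $\F_{\mathrm{min}}$ and $\F_{\mathrm{max}}$ respectively. Consequently $\F_{\mathrm{min}}\cup\F_{\mathrm{max}}$ contains no $3$-chain, so by Erd\H{o}s' theorem $|\F_{\mathrm{min}}\cup\F_{\mathrm{max}}|\le\Sigma(n,2)$. To upgrade this to $|\F|\le\Sigma(n,2)$, I would build an injection from $\F_{\mathrm{mid}}$ into the ``deficit'' sets of the extremal configuration (i.e., sets of the two largest levels absent from $\F_{\mathrm{min}}\cup\F_{\mathrm{max}}$), charging each middle set to a missing extremal-level set via a shadow/compression argument that uses the unique predecessor/successor rigidity.

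For the equality case, $|\F|=\Sigma(n,2)$ forces both the $2$-Sperner bound on $\F_{\mathrm{min}}\cup\F_{\mathrm{max}}$ to be attained and $\F_{\mathrm{mid}}$ to be empty; the equality case of Erd\H{o}s' theorem then makes $\F_{\mathrm{min}}\cup\F_{\mathrm{max}}$ the union of two full consecutive levels, which by the size requirement $\Sigma(n,2)$ must be the two largest. The principal obstacle I foresee is the construction of the injection $\F_{\mathrm{mid}}\hookrightarrow (\binom{[n]}{k}\cup\binom{[n]}{k+1})\setminus(\F_{\mathrm{min}}\cup\F_{\mathrm{max}})$: middle sets may live on arbitrary levels, so moving them onto the extremal levels while avoiding collisions requires a careful compression or shadow-type argument exploiting the full $B$-free structure, rather than just the local antichain/uniqueness consequences already extracted.
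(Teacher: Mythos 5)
First, note that the paper does not prove Theorem~\ref{thm:DKS}; it is quoted from De Bonis, Katona and Swanepoel \cite{de2005largest}, so there is no in-paper proof to compare against. Judged on its own terms, your proposal gets the easy parts right: the lower-bound construction and its $B$-freeness check are correct, and the structural consequences you extract from $B$-freeness are all valid and standard (the middle sets $\F_{\mathrm{mid}}$ form an antichain; each middle set has a unique set of $\F$ strictly below it and a unique one strictly above it, and these are respectively minimal and maximal in $\F$; hence $\F\setminus\F_{\mathrm{mid}}$ is $2$-Sperner and has size at most $\Sigma(n,2)$ by Erd\H{o}s' theorem). The present paper uses exactly the first two of these observations in Section~\ref{SectionintBfree}.

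The gap is the step you yourself flag, and it is not a technicality but the entire content of the theorem: absorbing $\F_{\mathrm{mid}}$. Worse, the accounting scheme you propose cannot work even if the injection were constructed. An injection $\F_{\mathrm{mid}}\hookrightarrow\bigl(\binom{[n]}{k}\cup\binom{[n]}{k+1}\bigr)\setminus(\F_{\mathrm{min}}\cup\F_{\mathrm{max}})$ only yields $\abs{\F}\le\Sigma(n,2)+\abs{(\F_{\mathrm{min}}\cup\F_{\mathrm{max}})\setminus(\binom{[n]}{k}\cup\binom{[n]}{k+1})}$, since the ``deficit'' of the two largest levels is not the same as the deficit $\Sigma(n,2)-\abs{\F_{\mathrm{min}}\cup\F_{\mathrm{max}}}$ unless $\F_{\mathrm{min}}\cup\F_{\mathrm{max}}$ already lies entirely on the two largest levels. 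So you would have to charge middle sets against a \emph{normalized} (LYM-type) budget rather than against raw cardinalities, which is precisely what the known proofs do: DKS and, later, Griggs--Li--Lu \cite{griggs2012diamond,griggs2013partition} bound a weighted sum over full chains (equivalently, show a Lubell-function bound of $2$ after excluding the extreme levels), where the key point is that a middle set, its unique predecessor and its unique successor occupy three sets on comparatively few chains, so the chains through a middle set can pay for it. This is also the mechanism the present paper adapts (compare Lemmas~\ref{mainlemma} and~\ref{cestimate}, where $\alpha_F/\beta_F$ quantifies exactly this trade-off on cyclic permutations). Without that weighted step, your argument proves only $\abs{\F\setminus\F_{\mathrm{mid}}}\le\Sigma(n,2)$, and the equality analysis is likewise contingent on the missing step.
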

 
A family $\F$ is called \emph{intersecting} if for any two members $F,F'\in \F$ we have $F \cap F'\neq \emptyset$, and it is called $t$-\emph{intersecting} if for any two members $F,F'\in \F$ we have $|F\cap F'|\ge t$.
Now we will mention some theorems where there is a forbidden subposet and the family is also required to be intersecting.  Milner~\cite{milner1968combinatorial} determined the size of the largest $t$-intersecting antichain.  In the case $t=1$, Milner's result yields

\begin{theorem}[Milner \cite{milner1968combinatorial}]
Let $\F \subseteq 2^{[n]}$ be an intersecting antichain,  then
\begin{displaymath}
\abs{\F} \le \binom{n}{\floor{\frac{n}{2}}+1}.
\end{displaymath}
\end{theorem}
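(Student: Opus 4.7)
The plan is to reduce to the case where $\F$ lies entirely in the upper half of the Boolean lattice, and then apply the LYM inequality. Let $s = \lfloor n/2 \rfloor$ and partition $\F = \F^- \sqcup \F^+$ with $\F^- = \{A \in \F : |A| \le s\}$ and $\F^+ = \F \setminus \F^-$. The aim is to build an antichain $\F'$ of the same cardinality as $\F$, consisting only of sets of size at least $s+1$. Once this is done, LYM combined with $\binom{n}{|B|} \le \binom{n}{s+1}$ for all $|B| \ge s+1$ gives $|\F| = |\F'| \le \binom{n}{s+1}$.

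To construct $\F'$, I would set up a bipartite graph with left vertices $\F^-$ and right vertices $\binom{[n]}{s+1} \setminus \F$, placing an edge from $A$ to each $B \supseteq A$, and seek a matching saturating $\F^-$; the family $\F'$ is then $\F^+$ together with the images. The antichain hypothesis on $\F$ ensures that no set in $\F^+$ at level $s+1$ is a superset of any $A \in \F^-$, so removing $\F \cap \binom{[n]}{s+1}$ from the right side does not disconnect any left vertex; similarly, no new set $A'$ can lie below a set in $\F^+$, since that would force the original $A$ below a set in $\F^+$, contradicting the antichain property of $\F$. Hall's condition thus reduces to showing $|\nabla_{s+1}(\G)| \ge |\G|$ for every $\G \subseteq \F^-$, and when $\G$ sits at a level $k \le s-1$ this follows from the normalized matching property of the Boolean lattice, since $\binom{n}{k} \le \binom{n}{s+1}$.

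The principal obstacle is the case when $n$ is even and $\G$ lies at level $k = s = n/2$: normalized matching alone yields only $|\nabla_{s+1}(\G)| \ge |\G|\cdot s/(s+1)$, which is too weak. Here the intersecting hypothesis becomes essential, since at level $n/2$ an intersecting family omits one of each complementary pair, and one leverages this (for instance via a shift to a compressed intersecting family or a direct Kruskal--Katona-style shadow estimate) to recover Hall's condition in this boundary case. Once the matching is obtained, LYM on the resulting $\F'$ closes the argument and yields $|\F| \le \binom{n}{\lfloor n/2 \rfloor + 1}$.
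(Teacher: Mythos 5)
Your strategy (push every set of $\F$ up to level $\lfloor n/2\rfloor+1$ via a system of distinct representatives, then apply LYM) is a genuinely different route from the paper's: the paper does not prove Milner's theorem directly but derives it from the Greene--Katona--Kleitman inequality (Theorem \ref{gkk}), each of whose summands is at least $1/\binom{n}{\lfloor n/2\rfloor+1}$. Your outline is a legitimate known strategy, but as written it has a genuine gap exactly at the point where the theorem has its content. For $n$ odd the bound coincides with Sperner's, and your argument never needs the intersecting hypothesis; all the work is in the even case at level $s=n/2$, and there you only assert that ``one leverages'' the intersecting property ``via a shift \dots or a Kruskal--Katona-style shadow estimate.'' The statement you actually need is that every intersecting subfamily $\G\subseteq\binom{[n]}{n/2}$ satisfies $\abs{\nabla(\G)}\ge\abs{\G}$ (Hall requires this for \emph{every} subfamily, not just $\F\cap\binom{[n]}{n/2}$; fortunately subfamilies of intersecting families are intersecting). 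This is equivalent, by complementation, to Katona's intersecting shadow theorem with $t=1$, which is a substantive result of roughly the same depth as Milner's theorem itself. Until you prove or correctly invoke that, the proof is incomplete at its critical step.

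A second, more technical gap: you reduce Hall's condition to subsets $\G$ lying on a single level, but $\F^-$ may span many levels, and the level-$(s+1)$ neighborhoods of different levels of an arbitrary $\G\subseteq\F^-$ can overlap, so per-level normalized matching does not immediately give $\abs{N(\G)}\ge\abs{\G}$ for mixed $\G$. The standard repair is to iterate: repeatedly replace the sets on the lowest occupied level $k$ by distinct supersets on level $k+1$ (normalized matching gives strict expansion for $k<n/2$, and the new family is again an intersecting antichain since supersets of intersecting sets intersect), until everything sits on level $n/2$, and only then invoke the shadow theorem for the final step. With those two repairs the argument closes; without them it does not.
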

This result follows from a more general inequality of Greene, Katona and Kleitman~\cite{greene1976extensions} (See also \cite{katona1998simple} and \cite{scott1999another} for other simple proofs).  

\begin{theorem}[Greene, Katona, Kleitman \cite{greene1976extensions}]
\label{gkk}
Let $\F \subseteq 2^{[n]}$ be an intersecting antichain, then
\begin{displaymath}
\sum_{\substack{F\in \F \\ \abs{F} \le \frac{n}{2}}} \frac{1}{\binom{n}{\abs{F}-1}}+ \sum_{\substack{F\in \F \\ \abs{F} > \frac{n}{2}}} \frac{1}{\binom{n}{\abs{F}}} \le 1.
\end{displaymath}
\end{theorem}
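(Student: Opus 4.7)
The plan is to generalize the standard LYM-style random-chain proof. Partition $\F$ into $\F_1 = \{F \in \F : \abs{F} \le n/2\}$ and $\F_2 = \{F \in \F : \abs{F} > n/2\}$. Take a uniformly random permutation $\sigma$ of $[n]$ and let $C_i = \{\sigma(1), \ldots, \sigma(i)\}$ be the resulting maximal chain. For each $F \in \F_2$, consider the event that $F$ appears in the chain, i.e.\ $C_{\abs{F}} = F$, whose probability is $1/\binom{n}{\abs{F}}$. For each $F \in \F_1$, consider the weighted indicator $\frac{1}{\abs{F}} \mathbb{1}[C_{\abs{F}-1} \subseteq F]$, whose expectation equals $1/\binom{n}{\abs{F}-1}$, since $\{C_{\abs{F}-1} \subseteq F\}$ is the disjoint union over $x \in F$ of the $\abs{F}$ events $\{C_{\abs{F}-1} = F \setminus \{x\}\}$, each of probability $1/\binom{n}{\abs{F}-1}$. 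Summing over $F \in \F$, the left-hand side of the claimed inequality equals the expectation of the total (weighted) indicator sum $\sum_{F \in \F} X_F(\sigma)$, and the task reduces to bounding this expectation by $1$.

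In the purely antichain case the bound $\sum_F X_F(\sigma) \le 1$ holds pointwise and the result follows by linearity. Here, however, the weighted $\F_1$-terms may cause the per-chain sum to exceed $1$, so a genuine averaging argument is needed. The plan is to use the intersecting and antichain hypotheses jointly to extract the following structural identity: if on a chain $C$ a large set $F^* \in \F_2$ matches and a small set $F \in \F_1$ matches, then $F \cap F^* = C_{\abs{F}-1}$ and $\abs{F \setminus F^*} = 1$. This is forced because $C_{\abs{F}-1} \subseteq F^*$ (the chain grows through $F^*$), $C_{\abs{F}-1} \subseteq F$ by matching, and $F \not\subseteq F^*$ by the antichain property. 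The intersecting hypothesis is essential to rule out the degenerate case $\abs{F} = 1$ with $F \cap F^* = \emptyset$ that would otherwise break the identity.

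The main obstacle is converting this chain-level rigidity into the global bound on the expectation. My plan is to design an involution $\sigma \mapsto \sigma'$ on permutations that preserves the $F^*$-match while destroying the excess $\F_1$-matches, thereby giving $\sum_F X_F(\sigma) + \sum_F X_F(\sigma') \le 2$ in the relevant regime; a natural candidate rearranges the prefix and suffix of $\sigma$ with respect to the partition $F^* \sqcup ([n] \setminus F^*)$, so that each ``overloaded'' chain is paired with an ``underloaded'' partner. An alternative plan, avoiding an explicit involution, is to condition on the identity of $F^*$ and bound the number of $\F_1$-matches on chains passing through $F^*$ by applying an extremal estimate for intersecting families in the spirit of Theorem~\ref{thm:KT} to the induced subfamily of $\F_1$-sets whose symmetric difference with $F^*$ has size exactly $1$. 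Executing either route, and in particular pinning down the precise way in which the intersecting hypothesis limits the number of excess contributors, is the delicate combinatorial heart of the argument.
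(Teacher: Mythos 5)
There is a genuine gap. Your setup is fine: the expectation identity is correct, and the left-hand side of the inequality does equal $\mathbb{E}\bigl[\sum_{F\in\F}X_F(\sigma)\bigr]$ for your weighted indicators. But the entire content of the theorem is the bound $\mathbb{E}\bigl[\sum_{F}X_F(\sigma)\bigr]\le 1$, and you do not prove it: both of your candidate routes (the involution on permutations and the conditioning on $F^*$) are left unexecuted, and you say so yourself. Moreover, your diagnosis of where the difficulty sits is off. You locate the obstruction in chains that simultaneously match a large set $F^*\in\F_2$ and small sets of $\F_1$, but the per-chain sum already exceeds $1$ when $\F_2=\varnothing$: for $n=4$ and $\F=\{\{1,2\},\{1,3\},\{1,4\}\}$ (an intersecting antichain of sets of size $\frac n2$), any chain with $\sigma(1)=1$ gives $\sum_F X_F(\sigma)=\frac32$. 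So the averaging you need is precisely the Erd\H{o}s--Ko--Rado-type averaging for an intersecting family of small sets, and neither of your plans engages with that case — both presuppose the presence of an $F^*\in\F_2$ to condition on or to preserve under the involution. The "structural identity" you derive (that $F\cap F^*=C_{\abs{F}-1}$ and $\abs{F\setminus F^*}=1$) is correct but does not address the main case.

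For orientation: the paper does not prove this statement — it is quoted from \cite{greene1976extensions} — but the known proofs, including the adaptation used in Section~\ref{ButterflyBollobas} for the butterfly analogue, run Katona's cycle method rather than random maximal chains. One fixes a cyclic permutation, gives each interval $F$ of size at most $\frac n2$ a weight proportional to $\frac{n-\abs{F}+1}{\abs{F}}$ and each longer interval weight $1$, and proves a sharp bound on the total weight of an intersecting antichain of intervals along a single cyclic permutation (the counting in Lemma~\ref{cycleantichain_atmostnhalf} is the prototype of this step); summing over all cyclic permutations then yields the inequality. If you want to salvage your random-chain formulation, you must supply a per-object averaged bound of exactly this kind; as written, the proposal stops short of the theorem.
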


In the case when $\F$ consists of only sets of size at most $\frac{n}{2}$, Bollob\'{a}s~\cite{bollobas1973sperner} proved a stronger inequality generalizing the Erd\H{o}s-Ko-Rado theorem~\cite{erdos1961intersection}.

\begin{theorem}[Bollob\'{a}s \cite{bollobas1973sperner}]
\label{bollobas}
Let $\F \subseteq 2^{[n]}$ be an intersecting antichain and assume that for all $F \in \F$ we have $\abs{F} \le \frac{n}{2}$, then
\begin{displaymath}
\sum_{F \in \F} \frac{1}{\binom{n-1}{\abs{F}-1}} \le 1.
\end{displaymath}
\end{theorem}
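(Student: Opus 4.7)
The plan is to combine Katona's cyclic permutation method with a weighted indicator argument. Fix a uniformly random cyclic arrangement $\sigma$ of $[n]$ on a circle, and for each $F\in\F$ with $|F|=k$, let $Y_F(\sigma)$ be the indicator that $F$ occupies $k$ consecutive positions in $\sigma$. A direct count of cyclic arrangements in which $F$ forms a single block gives $k!(n-k)!$, so $\mathbb{E}[Y_F] = k!(n-k)!/(n-1)! = k/\binom{n-1}{k-1}$.

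Now consider the weighted sum $Z(\sigma) := \sum_{F\in\F} Y_F(\sigma)/|F|$. By linearity of expectation, $\mathbb{E}[Z] = \sum_{F\in\F} 1/\binom{n-1}{|F|-1}$, which is precisely the quantity to be bounded. It therefore suffices to prove the pointwise bound $Z(\sigma)\le 1$ for every cyclic arrangement $\sigma$. The subfamily $\mathcal{A}(\sigma) = \{F\in\F : F \text{ is an arc of } \sigma\}$ inherits from $\F$ the properties of being pairwise intersecting and an antichain, with every arc of length at most $n/2$.

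The heart of the proof is the following structural arc lemma: for every family $\mathcal{A}$ of arcs on a cycle of length $n$ that is pairwise intersecting, forms an antichain, and has each arc of length at most $n/2$, we have $\sum_{A\in\mathcal{A}} 1/|A|\le 1$. I would establish this by proving the stronger statement $m := |\mathcal{A}| \le \min_{A\in\mathcal{A}}|A|$. Let $A^*\in\mathcal{A}$ be a shortest arc, of length $k^*$; since $|A^*|\le n/2<n$, there is a position not in $A^*$, and cutting the cycle there turns $A^*$ into a linear interval $[0,k^*-1]$. Every other arc $A'\in\mathcal{A}$ must then be either (i) a right-extender $[s',e']$ with $s'\in\{1,\dots,k^*-1\}$ and $e'\ge k^*$, or (ii) a left-extender that wraps around the cut and ends somewhere in $\{0,\dots,k^*-2\}$ of $A^*$; containment would violate the antichain condition, ruling out other possibilities.

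For the right-extenders in (i), sorting by $s'$ forces both $s'$ and $e'$ to strictly increase (by antichain), so their count is at most $k^*-1$; the left-extenders of (ii) satisfy an analogous count. A more delicate bipartite analysis, using the pairwise-intersection condition between right- and left-extenders together with the length constraint $|A'|\le n/2$ (which forces $s_L\ge n/2+e_L+1$ for any wrapping arc), yields the combined bound $r+\ell\le k^*-1$, so $m=1+r+\ell\le k^*$, and hence $\sum 1/|A_i|\le m/k^*\le 1$. The main obstacle is precisely this last bipartite step: a right-extender and a left-extender can meet either inside $A^*$ or in the complementary region, and one must exploit both possibilities to forbid one too many extenders. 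Taking expectations in $\mathbb{E}[Z] = \sum_{F\in\F} 1/\binom{n-1}{|F|-1}$ then completes the proof.
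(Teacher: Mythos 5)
Your proposal is correct and is essentially Bollob\'as's original cycle-method proof, which is also the approach the paper relies on: the weight $1/\abs{F}$ double count over cyclic permutations appears in Section~\ref{ButterflyBollobas}, and your structural arc lemma ($m\le\min_{A}\abs{A}$) is exactly Lemma~\ref{cycleantichain_atmostnhalf}, whose proof fixes a shortest interval $A_1^{k}$ and observes that each other interval contributes a first element in $\{2,\dots,k\}$ or a last element in $\{1,\dots,k-1\}$, with the pairing ``last element $i$'' versus ``first element $i+1$'' mutually exclusive because two such arcs of length at most $n/2$ would be disjoint. The ``delicate bipartite step'' you flag as the main obstacle is precisely this one-line pairing, so the argument goes through as you outline it.
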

Note that Theorems \ref{gkk} and \ref{bollobas} are implied by a more general result of P\'{e}ter Erd\H{o}s, Frankl and Katona \cite{erdHos1984intersecting} which determined the profile polytope for intersecting antichains.

In the course of determining the profile polytope for complement-free $k$-Sperner families, Gerbner~\cite{gerbner2013profile} proved a generalization of Milner's theorem (the 1-intersecting case) to the $k$-Sperner setting.

\begin{theorem}[Gerbner \cite{gerbner2013profile}]
\label{kchainint}
Let $\F \subseteq 2^{[n]}$ be an intersecting $k$-Sperner family, then
\begin{equation}
\label{dani}
\abs{\F} \le \begin{cases}
\sum\limits_{i = \frac{n+1}{2}}^{\frac{n+1}{2} + k -1} \binom{n}{i}, &\text{if $n$ is odd} \\
\binom{n-1}{\frac{n}{2}-1} + \sum\limits_{i=\frac{n}{2}+1}^{\frac{n}{2}+k-1} \binom{n}{i} + \binom{n-1}{\frac{n}{2}+k},  &\text{if $n$ is even.}
\end{cases}
\end{equation}
\end{theorem}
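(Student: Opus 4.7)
The plan is to prove the upper bound of Theorem \ref{kchainint} by combining a chain-based weighted inequality in the spirit of Greene-Katona-Kleitman (Theorem \ref{gkk}) with combinatorial analysis of the extremal structure. First, I would establish the following generalization of Theorem \ref{gkk} to intersecting $k$-Sperner families:
\begin{equation*}
\sum_{\substack{F \in \F \\ \abs{F} \le n/2}} \frac{1}{\binom{n}{\abs{F} - 1}} + \sum_{\substack{F \in \F \\ \abs{F} > n/2}} \frac{1}{\binom{n}{\abs{F}}} \le k,
\end{equation*}
which reduces to Theorem \ref{gkk} when $k = 1$.

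To prove this inequality, I would use the standard chain method: take a uniformly random permutation $\pi$ of $[n]$ with associated maximal chain $C_\pi = \{\emptyset, \{\pi(1)\}, \{\pi(1), \pi(2)\}, \dots, [n]\}$. Each ``large'' set $F \in \F$ with $\abs{F} > n/2$ is charged by the indicator that $F$ itself is an initial segment of $\pi$, while each ``small'' set $F \in \F$ with $\abs{F} \le n/2$ is charged by the indicator that some $(\abs{F}-1)$-subset of $F$ is an initial segment of $\pi$. The intersecting hypothesis rules out the bad configurations in which a ``small'' and a ``large'' hit would double-count a chain, so the total weighted hits on any chain are still at most $k$ (mirroring the bound of $1$ in the $k = 1$ case). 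Averaging over $\pi$ yields the inequality.

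Given this inequality, I would maximize $\abs{\F} = \sum_i p_i$, where $p_i = \abs{\F \cap \binom{[n]}{i}}$, subject to it together with the natural bounds $0 \le p_i \le \binom{n}{i}$ and the Erd\H{o}s-Ko-Rado constraint $p_i \le \binom{n-1}{i-1}$ for $i \le n/2$. The weights of the LYM-type inequality are smallest near $n/2$, so greedily filling those levels is optimal and yields $\sum_{i = (n+1)/2}^{(n+1)/2 + k - 1} \binom{n}{i}$ for odd $n$, matching the extremal family of $k$ consecutive top levels and saturating the inequality.

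For even $n$ the linear-programming relaxation is not integer-tight, and a combinatorial step is needed to close the gap. In a putative extremal family with $p_{n/2}$ near its Erd\H{o}s-Ko-Rado maximum $\binom{n-1}{n/2-1}$, one may assume after symmetrization that the size-$n/2$ sets all contain a common element $x$. Then any fully populated intermediate levels $p_i = \binom{n}{i}$ for $n/2 < i < n/2 + k$, combined with a size-$(n/2+k)$ set $C$ containing $x$, would produce a chain of length $k+1$ in $\F$ passing through $x$ and contradict the $k$-Sperner hypothesis. This forces $p_{n/2+k} \le \binom{n-1}{n/2+k}$, and summing the contributions reproduces the stated bound. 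The main obstacle will be this even case: the LYM-type inequality alone is not sufficient to pin down the integer maximum, and one must weave together the Sperner chain structure and the Erd\H{o}s-Ko-Rado characterization of extremal intersecting families at level $n/2$ to conclude.
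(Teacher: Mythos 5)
Your generalized Greene--Katona--Kleitman inequality with right-hand side $k$ is true, but not by the argument you sketch: a uniformly random maximal chain contains \emph{some} $(\abs{F}-1)$-subset of a small set $F$ with probability $\abs{F}/\binom{n}{\abs{F}-1}$, not $1/\binom{n}{\abs{F}-1}$, so the indicators you describe do not produce the stated weights, and the assertion that ``the total weighted hits on any chain are at most $k$'' is exactly what needs proof (it is nontrivial even for $k=1$; Theorem \ref{gkk} is usually proved by the cycle method, not by counting maximal chains). This step is repairable, and more cheaply than you propose: by Mirsky's decomposition an intersecting $k$-Sperner family is a union of $k$ intersecting antichains, and applying Theorem \ref{gkk} to each summand gives the inequality with bound $k$ immediately.

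The fatal gap is the linear-programming step, and it already fails for odd $n$ once $k\ge 3$, not only in the even case you flag. Your LP keeps only the profile vector subject to the GKK-type constraint, $0\le p_i\le\binom{n}{i}$, and $p_i\le\binom{n-1}{i-1}$ for $i\le n/2$; but the level $i=\frac{n-1}{2}$ has efficiency $\binom{n}{\frac{n-3}{2}}=\binom{n}{\frac{n+3}{2}}$ per unit of GKK-weight, which ties the level $\frac{n+3}{2}$ and strictly beats every level from $\frac{n+5}{2}$ upward, so the greedy/LP optimum diverts budget there. Concretely, for $n=9$, $k=3$ the point $p_4=\binom{8}{3}=56$, $p_5=126$, $p_6=84$, $p_7=12$ satisfies all your constraints with GKK-weight $\frac{56}{84}+1+1+\frac{12}{36}=3$, yet has $\sum_i p_i=278$, while the theorem claims the bound $\binom{9}{5}+\binom{9}{6}+\binom{9}{7}=246$. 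Hence no argument that uses only these global inequalities can prove the theorem; the $k$-Sperner structure must be exploited beyond the single scalar constraint. (This is precisely why the paper works locally: Lemma \ref{cyclelemma} bounds the weight on each cyclic permutation separately, where the antichain bound of Lemma \ref{cycleantichain} \emph{together with} the complement-pairing --- at most $n$ intervals of sizes $\floor{\frac{n}{2}}-j$ or $\floor{\frac{n}{2}}+1+j$ jointly, for each $j$ --- rules out exactly the kind of mass transfer to level $\frac{n-1}{2}$ that your relaxation permits.) Your even-$n$ patch has the further problem that ``near the EKR maximum implies a star after symmetrization'' is unjustified: shifting need not preserve the $k$-Sperner property, and a stability statement of Hilton--Milner type would be needed and then reconciled with the other levels; as written this is a sketch of a hope, not a proof.
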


For simplicity, we denote the right-hand side of \eqref{dani} by $\sum_I(n,k)$.  For any given $P$, we define
\begin{displaymath}
\La_I(n,P) = \max_{\F \subseteq 2^{[n]}} \{\abs{\F}: \F \mbox{ does not contain } P \mbox{ as a subposet and } \F \mbox{ is intersecting}\}.
\end{displaymath}

In this language, Theorem~\ref{kchainint}  states that $\La_I(n,P_{k+1}) = \sum_I(n,k)$, where $P_{k+1}$ is the path poset of length $k+1$.  Before we state our main results we need to introduce some notation. For all $n$ and $k \le \frac{n+1}{2}$, define 
\begin{displaymath}
	\h_{0,n,k} = \binom{[n]}{\floor{\frac{n}{2}}+1} \cup \binom{[n]}{\floor{\frac{n}{2}}+2} \cup \dots \cup  \binom{[n]}{\floor{\frac{n}{2}}+k}, 
\end{displaymath}
and in the case when $n$ is even, for any $x \in [n]$, define
\begin{displaymath}
	\h_{x,n,k} = \{F \in \binom{[n]}{\frac{n}{2}}:x \in F\} \cup \binom{[n]}{\frac{n}{2}+1} \cup \dots \cup \binom{[n]}{\frac{n}{2}+k-1} \cup \{F \in \binom{[n]}{\frac{n}{2}+k}:x \not\in F\}.
\end{displaymath}

We determine the exact value of $\La_I(n,B)$, the maximum size of an intersecting butterfly-free family, for $n \ge 17$.   In particular, we show that $\La_I(n,B) = \Sigma_I(n,2)$. The cases of equality are also obtained.

\begin{theorem}
	\label{intBfree}
	Let $\F \subseteq 2^{[n]}$ be an intersecting $B$-free family of subsets of $[n]$ where $n \ge 17$.   Then,
	\begin{displaymath}
	\abs{\F} \le \Sigma_I(n,2).
	\end{displaymath}
	Equality holds if and only if:
	\begin{itemize}
		\item For $n$ odd, $\F = \h_{0,n,2}$;
		\item For $n$ even, $\F = \h_{x,n,2}$ for some $x \in [n]$.
	\end{itemize}
\end{theorem}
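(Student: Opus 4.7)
The plan is to adapt the Griggs-Li-Lu chain partition method to simultaneously exploit both the $B$-free and intersecting constraints, in the same spirit as Gerbner's proof of Theorem~\ref{kchainint} for the $k$-Sperner case. The numerical coincidence that the target bound $\Sigma_I(n,2)$ equals the intersecting $2$-Sperner bound strongly suggests that the saturating family should effectively be a $2$-Sperner one, so one should aim at a partition argument whose local bounds degenerate, at extremality, into a $2$-Sperner structure.

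Concretely, I would partition $2^{[n]}$ into blocks (convex subposets) adapted to the butterfly poset---most blocks being chains from a symmetric chain decomposition, occasionally paired so that a butterfly spanning two chains can be detected. On each block $\C$, I would derive a bound on $|\F \cap \C|$ that combines two ingredients: first, the structural bound from $B$-freeness (as used in Theorem~\ref{thm:DKS} by De Bonis, Katona, and Swanepoel), which limits how many consecutive levels $\F$ can occupy in a block before creating a butterfly; and second, the Bollob\'as-type inequality (Theorem~\ref{bollobas}) together with its Greene-Katona-Kleitman variant (Theorem~\ref{gkk}), which forces sets at low levels to contribute only through specific intersecting configurations. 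Summing these local bounds with weights calibrated so that levels $\lfloor n/2 \rfloor + 1$ and $\lfloor n/2 \rfloor + 2$ receive full weight should yield $|\F| \le \Sigma_I(n,2)$.

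For the equality characterization, I would trace equality through each local bound. For odd $n$, saturation forces $\F$ to be contained in exactly two adjacent middle levels, which combined with the intersecting property gives $\F = \h_{0,n,2}$. For even $n$, there is extra slack at levels $n/2$ and $n/2 + 2$, and I expect to show via a kernel/common-element argument, analogous to that in Milner's theorem, that all sets at these boundary levels share or avoid a common element $x$, giving $\F = \h_{x,n,2}$. The hardest step will be designing the partition so that the local bounds are tight only at the claimed extremal families---in particular, ruling out hybrid configurations with different splitting elements at different levels in the even case. The hypothesis $n \ge 18$ presumably arises from comparing binomial coefficients across adjacent levels to absorb lower-order terms in the summation, and possibly from invoking Theorem~\ref{thm:KT} to control $V$-shaped subfamilies that appear during the analysis.
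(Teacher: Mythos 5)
Your high-level intuition is right on two counts: the paper does describe its method as a generalization of the Griggs--Li--Lu partition method, and the extremal families are indeed forced to be $2$-Sperner in the end. But there is a genuine gap at the central difficulty, which your proposal never confronts. A $B$-free family is \emph{not} $2$-Sperner: it may contain sets $F$ with $A \subset F \subset B$ for $A,B \in \F$ (a $3$-chain contains no butterfly), so any static block decomposition in which you bound $|\F \cap \C|$ by ``two levels' worth'' per chain is simply false on the blocks containing such a middle set, and the DKS structural bound plus the Bollob\'as/GKK inequalities do not repair this --- indeed the paper's Theorems~\ref{gkk2} and~\ref{bollobas2} carry a factor of $2$ on the right-hand side precisely because of this, and are themselves proved by the same mechanism, so they cannot serve as an independent input. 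The paper's resolution is to replace a fixed partition by an average over all cyclic permutations, classify the intervals of $\F$ along each cycle $\sigma$ into top, bottom, middle and isolated, and exploit the quantitative fact (Lemma~\ref{cestimate}) that a middle set of $\F$ appears as an \emph{isolated} interval on far more cyclic permutations than as a middle interval: the ratio $c = \max_F \alpha_F/\beta_F$ is $O(1/n)$. Charging each isolated interval a $(1+c)$ penalty (Lemma~\ref{mainlemma}) then exactly absorbs the excess weight of middle intervals, reducing the per-cycle bound to the intersecting $2$-Sperner bound on cycles (Lemma~\ref{2sperner_cycle}). This averaging-and-compensation step is the new idea of the proof, and nothing in your proposal supplies a substitute for it.

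Your treatment of the equality case also diverges from what actually works. Rather than a Milner-style kernel argument at the boundary levels, the paper observes that the per-cycle inequality is strict whenever there is at least one isolated interval; since every middle set of $\F$ is isolated along some $\sigma$, an extremal $\F$ has no middle sets at all, hence is $2$-Sperner, and the classification then follows from the separately proved equality analysis for intersecting $k$-Sperner families (Theorem~\ref{kspint}), whose even case does require the kind of common-element argument you sketch --- but carried out on cyclic permutations via a transposition argument, not on a chain partition. Finally, the bound $n \ge 18$ comes from the inequality $c \le 4/(n-5)$ and from disposing of sets of size $1$ or $n-1$ via Theorems~\ref{thm:KT} and~\ref{thm:DKS}, so your guess about the role of Theorem~\ref{thm:KT} is partially correct, though it enters only in that preliminary reduction.
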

The proof of this theorem can be seen as a generalization of the partition method of Griggs, Li and Lu \cite{griggs2012diamond, griggs2013partition} to a weighted setting involving cyclic permutations. We also show that a variant of the LYM-type inequalities, Theorems \ref{gkk} and \ref{bollobas}, hold in this case. 

\begin{theorem}
	\label{gkk2}
	Let $\F \subseteq 2^{[n]}$ be an intersecting $B$-free family with $2 \le \abs{F} \le n-2$ for all $F \in \F$, then
	\begin{displaymath}
	\sum_{\substack{F\in \F \\ \abs{F} \le \frac{n}{2}}} \frac{1}{\binom{n}{\abs{F}-1}}+ \sum_{\substack{F\in \F \\ \abs{F} > \frac{n}{2}}} \frac{1}{\binom{n}{\abs{F}}} \le 2.
	\end{displaymath}
\end{theorem}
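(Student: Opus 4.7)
The plan is to adapt the cyclic-permutation argument underlying the classical proof of the Greene--Katona--Kleitman inequality (Theorem~\ref{gkk}) to the $B$-free setting. Since a $B$-free family can be roughly twice as large as an antichain---as reflected by the bound $\La(n,B)=\Sigma(n,2)$ in Theorem~\ref{thm:DKS}---the constant $1$ appearing in Theorem~\ref{gkk} should be replaced by $2$ here.

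I would begin with a standard cyclic double-counting. For each of the $(n-1)!$ cyclic arrangements $\sigma$ of $[n]$, let $\F_\sigma \subseteq \F$ denote the subfamily of sets that appear as cyclic intervals in $\sigma$. A set of size $k$ appears as a cyclic interval in exactly $k!(n-k)!$ cyclic arrangements. Assigning each $F \in \F$ the weight
\[
w(F)=\begin{cases}\dfrac{n-|F|+1}{n\,|F|}, & |F|\le n/2,\\[4pt] \dfrac{1}{n}, & |F|>n/2,\end{cases}
\]
a direct computation gives
\[
\sum_{\sigma}\sum_{I\in\F_\sigma} w(I) \;=\; (n-1)! \cdot S,
\]
where $S$ denotes the left-hand side of the inequality to be proved. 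Hence $S\le 2$ follows from the local claim that $\sum_{I\in\F_\sigma} w(I)\le 2$ holds for every cyclic arrangement $\sigma$.

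The substantive part of the proof is this local cyclic claim. In a fixed cyclic arrangement $\sigma$, the family $\F_\sigma$ is an intersecting $B$-free family of cyclic intervals with sizes in $[2,n-2]$. I would split the analysis into two size regimes: intervals of size $>n/2$ automatically pairwise intersect, so their structure is controlled only by the $B$-free condition; intervals of size $\le n/2$ must pairwise intersect as arcs on the cycle, a strong combinatorial constraint. The $B$-free condition also couples the two regimes by forbidding butterflies that straddle the middle. The target bound of $2$ corresponds, roughly, to two ``antichain layers'' of intervals, mirroring the extremal families $\h_{0,n,2}$ and $\h_{x,n,2}$ that saturate Theorem~\ref{intBfree}.

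The main obstacle is to prove the local claim rigorously and uniformly over all cyclic arrangements. I expect the weighted cyclic partition method developed for Theorem~\ref{intBfree} to supply the right tools: its proof already treats intersecting $B$-free families of cyclic intervals via a refined Griggs--Li--Lu-type partition with cyclic weights, and the argument should specialise to yield the required local bound $\sum_{I\in\F_\sigma} w(I)\le 2$, completing the reduction.
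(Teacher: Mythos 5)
Your double-counting setup is correct (the weights $\tfrac{n-|F|+1}{n|F|}$ and $\tfrac1n$ are properly normalized so that the sum over cyclic permutations equals $(n-1)!\,S$), and this matches the paper's framework. The gap is in the ``substantive part'': you reduce everything to the local claim that $\sum_{I\in\F^{\sigma}}w(I)\le 2$ for \emph{every} cyclic permutation $\sigma$, and then only express the expectation that the machinery from Theorem~\ref{intBfree} will deliver it. It does not. The paper's per-cycle argument (Section~\ref{ButterflyBollobas}) proves only the weaker inequality $w(\F^{\sigma})\le 2+w(\M_{\sigma})-w(\I_{\sigma})$: one removes the middle intervals $\M_{\sigma}$ and the isolated intervals $\I_{\sigma}$, partitions what remains (a $2$-Sperner family of intervals) into two antichains $\A_1,\A_2$, and applies the one-cycle Greene--Katona--Kleitman bound to each of $\A_1\cup\I_{\sigma}$ and $\A_2\cup\I_{\sigma}$; since $\I_{\sigma}$ is counted twice, the isolated weight appears with a minus sign, but the middle weight appears with a plus sign and can push the per-cycle total above $2$. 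The excess is only killed after summing over all $\sigma$, using that each middle set $F\in\F_m$ occurs as an isolated interval in at least as many cyclic permutations as it occurs as a middle interval ($\beta_F\ge\alpha_F$, Lemma~\ref{cestimate}). This is an irreducibly global step, not a specialisation of a local bound.

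A concrete symptom of the gap: your argument never invokes the hypothesis $2\le|F|\le n-2$, yet that hypothesis is exactly what makes Lemma~\ref{cestimate} (hence $\beta_F\ge\alpha_F$) work. To repair the proof you need (i) the decomposition of $\F^{\sigma}$ into top/bottom/isolated/middle intervals, (ii) the two-antichain partition argument giving $w(\F^{\sigma})\le 2+w(\M_{\sigma})-w(\I_{\sigma})$, and (iii) the averaging step comparing $\alpha_F$ with $\beta_F$ over all cyclic permutations. Without (iii), the reduction to a purely local inequality is unjustified as stated.
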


\begin{theorem}
	\label{bollobas2}
	Let $\F \subseteq 2^{[n]}$ be an intersecting $B$-free family with $2 \le \abs{F} \le \frac{n}{2}$ for $F \in \F$, then
	\begin{displaymath}
	\sum_{F \in \F} \frac{1}{\binom{n-1}{\abs{F}-1}} \le 2.
	\end{displaymath}
\end{theorem}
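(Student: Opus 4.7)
The plan is to prove Theorem~\ref{bollobas2} by reducing it to a LYM-type inequality for $B$-free families on a smaller Boolean lattice, following the spirit of Bollob\'as' original proof (Theorem~\ref{bollobas}).

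First I would show that without loss of generality $\F$ is contained in a star $\{F \subseteq [n] : v \in F\}$ for some $v \in [n]$. For intersecting families this is achieved by Frankl's shifting (left-compression). The technical point is to verify that each shift preserves the $B$-free property in addition to the intersecting property: a butterfly introduced by a shift, taken under the inverse shift together with the size bound $|F| \le n/2$, would produce a butterfly already present in $\F$. This is reminiscent of the observation (used in the introduction) that $B$-freeness may be reformulated as ``for any two $F_1,F_2\in\F$ there is at most one $Y\in\F$ with $F_1,F_2\subsetneq Y$,'' a condition that behaves well under the four-set case analysis of a shift.

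Once $\F$ is contained in a star at $v$, the map $F\mapsto F\setminus\{v\}$ is a poset isomorphism onto $2^{[n]\setminus\{v\}}\cong 2^{[n-1]}$. Setting $\F' = \{F\setminus\{v\}: F\in\F\}\subseteq 2^{[n-1]}$, the image is $B$-free with sizes in $[1,\lfloor n/2\rfloor-1]$, and
\[
\sum_{F\in\F}\frac{1}{\binom{n-1}{|F|-1}} = \sum_{F'\in\F'}\frac{1}{\binom{n-1}{|F'|}}.
\]
It then remains to show that the right-hand side is at most $2$ for any $B$-free family in the lower half of $2^{[n-1]}$. I would prove this by a variant of the partition method of Griggs, Li, and Lu applied to $2^{[n-1]}$: partition the maximal chains (or suitable partial chains through the middle levels) so that each part contributes at most two LYM-weighted elements of $\F'$, using the $B$-free condition in the form above together with EKR-type bounds $|\F'_k|\le\binom{n-1}{k}$ on each layer. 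Averaging gives the bound of $2$, with equality precisely when $\F'$ is the union of two consecutive full levels, which transports back through the star to recover the extremal $\h_{0,n,2}$ or $\h_{x,n,2}$ of Theorem~\ref{intBfree}.

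The main obstacle is the preservation of $B$-freeness under shifting, which is not automatic for general posets. Should the direct shifting argument encounter difficulty, a more robust alternative is to bypass the star reduction and instead work with the cyclic-permutation identity $\sum_F 1/\binom{n-1}{|F|-1} = \mathbb{E}_\pi\!\left[\sum_{F\text{ arc of }\pi}1/|F|\right]$ for a uniformly random cyclic permutation $\pi$ of $[n]$, as in Katona's proof of Bollob\'as' inequality; one then has to bound the right-hand side in expectation using the intersecting and $B$-free structure of the arc family obtained from a typical $\pi$. Pointwise the arc-sum can exceed $2$ (witness the ``two-axes'' arcs through a vertex), so the averaging must genuinely exploit that $\F$ is a coherent family of subsets of $[n]$ rather than an unconstrained arc family, again via the reformulated $B$-free condition.
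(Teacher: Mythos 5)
Your primary route breaks at the very first step. Shifting (left-compression) does not reduce an intersecting family to a subfamily of a star: it only produces a \emph{shifted} family, and already the triangle $\{\{1,2\},\{1,3\},\{2,3\}\}$ (for $n\ge 4$) is an intersecting, $B$-free (indeed antichain) family with $2\le\abs{F}\le n/2$ that is shift-invariant yet contained in no star. So the claimed ``without loss of generality $\F\subseteq\{F: v\in F\}$'' is false, independently of the (also unresolved) question of whether shifts preserve $B$-freeness. Consequently the reduction to a Lubell-type bound on $2^{[n-1]}$ never gets off the ground, and that target inequality is itself a nontrivial unproved claim rather than something you can quote.

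Your fallback is the right framework --- it is exactly the identity the paper starts from, $\sum_F 1/\binom{n-1}{\abs{F}-1} = \frac{1}{(n-1)!}\sum_\sigma\sum_{F\in\F^\sigma}1/\abs{F}$ --- but you stop precisely where the work begins. You correctly observe that the per-permutation sum can exceed $2$; the paper's resolution needs three concrete ingredients for which your proposal offers no substitute. First, classify the intervals of $\F$ along a fixed $\sigma$ into middle ($\M_\sigma$), isolated ($\I_\sigma$), and the rest; the rest splits into two antichains $\A_1,\A_2$. Second, both $\A_1\cup\I_\sigma$ and $\A_2\cup\I_\sigma$ are intersecting antichains of intervals of size at most $n/2$, so Bollob\'as's cyclic bound gives each weight at most $1$; adding the two bounds counts the isolated intervals twice and yields $w(\F^\sigma)\le 2 + w(\M_\sigma) - w(\I_\sigma)$. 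Third --- and this is the decisive step --- a middle interval along $\sigma$ must be a middle set of $\F$, and Lemma~\ref{cestimate} shows every middle set of $\F$ is an isolated interval along at least as many cyclic permutations as it is a middle interval ($\beta_F\ge\alpha_F$), so the two error terms cancel upon summing over $\sigma$. Without this quantitative comparison the averaging argument does not close, so as written the proposal has a genuine gap.
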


Next we obtain an upper bound on $\La_I(n,P)$ for an arbitrary poset $P$ in the case when $n$ is odd. Let $h(P)$ be the height of the poset $P$, that is, the size of the longest chain in $P$. 

\begin{theorem}
	\label{oddcase}
	Assume $n$ is odd and $\frac{\abs{P} + h(P)}{2}$ is an integer. Let $\F$ be an intersecting $P$-free family of subsets of $[n]$, $n \ge 4$. Then, 
	\begin{displaymath}
	\abs \F \le \sum_{i = 1}^{\frac{\abs{P} + h(P)}{2}-1}  \binom{n}{\floor{\frac{n}{2}}+i}.
	\end{displaymath}
\end{theorem}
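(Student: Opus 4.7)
The plan is to combine the Burcsi--Nagy LYM inequality for $P$-free families with the complementary-pair restriction that the intersecting property imposes when $n$ is odd, and then to solve the resulting two-constraint linear programme. Set $m = (n-1)/2$, $s = \frac{|P|+h(P)}{2}-1$, and $T = \sum_{i=1}^{s}\binom{n}{m+i}$ for the right-hand side of the bound. I will use two inputs. First, the cyclic-chain LYM inequality underlying Burcsi and Nagy's bound $\La(n,P) \le s\binom{n}{\lfloor n/2\rfloor}$ gives
\[
\sum_{F\in\F}\frac{1}{\binom{n}{|F|}}\;\le\;s \qquad\text{for every $P$-free $\F\subseteq 2^{[n]}$.}
\]
Second, since $n$ is odd, every $F\subseteq[n]$ is disjoint from its complement $\bar F = [n]\setminus F$, so the intersecting property allows at most one of each such pair to lie in $\F$; writing $\F_k = \F\cap\binom{[n]}{k}$, this gives the complementary-pair inequalities
\[
|\F_k| + |\F_{n-k}|\;\le\;\binom{n}{k}\qquad (k=0,1,\dots,m).
\]

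Given these two inputs, the theorem will follow from a short LP argument. Set $y_k = |\F_k| + |\F_{n-k}|$ for $k\in\{0,\dots,m\}$, so that $|\F| = \sum_{k=0}^{m} y_k$; using $\binom{n}{n-k}=\binom{n}{k}$, the LYM inequality rearranges as $\sum_{k=0}^{m} y_k/\binom{n}{k}\le s$. I then maximise $\sum_{k=0}^{m} y_k$ subject to $y_k \le \binom{n}{k}$ and $\sum_{k=0}^{m} y_k/\binom{n}{k}\le s$. Since saturating $y_k = \binom{n}{k}$ always consumes exactly one unit of LYM-budget and the binomial coefficients are unimodal with maximum near $k=m$, the optimum is attained by fully saturating $y_k = \binom{n}{k}$ on the $s$ largest values of $\binom{n}{k}$ among $k\in\{0,\dots,m\}$, namely $k\in\{m,m-1,\dots,m-s+1\}$. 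Using the symmetry $\binom{n}{m-i}=\binom{n}{m+1+i}$ (which rests on $n=2m+1$), this maximum equals
\[
\sum_{i=0}^{s-1}\binom{n}{m-i}\;=\;\sum_{i=1}^{s}\binom{n}{m+i}\;=\;T,
\]
giving $|\F|\le T$ as required.

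The only substantive external ingredient is the Burcsi--Nagy LYM inequality itself, obtained from their cyclic-chain pairing argument that proves $\La(n,P)\le s\binom{n}{\lfloor n/2\rfloor}$; this is the hard part of the proof. Once that is available, the intersecting complementary-pair restriction is elementary and the LP optimisation is routine. It is reassuring that the candidate extremal family $\h_{0,n,s}$ (the top $s$ levels strictly above the middle) simultaneously saturates the LYM inequality and all of the complementary-pair inequalities, matching the LP optimum $T$ precisely.
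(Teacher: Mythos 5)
Your argument is correct and reaches the stated bound by a genuinely different route from the paper. The paper works with \emph{double chain--complement pairs} $\h_0^\pi=\D\cup\D'$ and performs a single weighted double count: the Burcsi--Nagy lemma caps the number of sets of $\F$ on each pair at $2(\abs{P}+h(P)-2)=4s$, the intersecting property caps the number of sets of each weight $\binom{n}{\floor{n/2}+i}$ at $4$ of the available $8$, and the per-pair optimisation is then immediate. You instead extract the two constraints globally --- the LYM-type inequality $\sum_{F\in\F}1/\binom{n}{\abs{F}}\le s$ from the double-chain count (note: Burcsi and Nagy's device is the double chain, not a cyclic chain), and the level-wise bounds $\abs{\F_k}+\abs{\F_{n-k}}\le\binom{n}{k}$ from the intersecting property --- and then solve the resulting fractional knapsack. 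The two proofs are essentially dual (local versus global aggregation of the same two facts); your LP formulation is arguably more transparent, and it isolates exactly which constraints the extremal family $\h_{0,n,s}$ saturates. Both rest on the same hard ingredient, the Burcsi--Nagy double-chain lemma.

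One technicality needs patching: the inequality $\sum_{F\in\F}1/\binom{n}{\abs{F}}\le s$ is \emph{not} valid for every $P$-free family. In the double-chain double count, a set $F$ with $1\le\abs{F}\le n-1$ lies on $2\abs{F}!(n-\abs{F})!$ double chains, but $\varnothing$ and $[n]$ lie on every double chain only once; dividing by $2\cdot n!$ therefore yields $\sum_{F\ne\varnothing,[n]}1/\binom{n}{\abs{F}}\le s-\tfrac12[\varnothing\in\F]-\tfrac12[[n]\in\F]$, and the full LYM sum can genuinely exceed $s$ (for the butterfly, $\{\varnothing,[n]\}\cup\binom{[n]}{\floor{n/2}}$ is $B$-free with LYM sum $3>2=s$). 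For intersecting families $\varnothing\notin\F$, so the only case to repair is $[n]\in\F$: feeding the corrected budget $\sum_{F\ne[n]}1/\binom{n}{\abs{F}}\le s-\tfrac12$ into your knapsack gives $\abs{\F}\le 1+\sum_{i=1}^{s-1}\binom{n}{\floor{n/2}+i}+\tfrac12\binom{n}{\floor{n/2}+s}$, which is at most $T$ whenever $\binom{n}{\floor{n/2}+s}\ge 2$, and the remaining case $s\ge\ceil{n/2}$ is trivial since then $T=2^{n-1}$ bounds any intersecting family. (The paper sidesteps the identical issue by assigning $[n]$ the artificial weight $4$.) With that one sentence added, your proof is complete.
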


\begin{note}
	Let $e(P)$ denote the maximum number of consecutive levels in $2^{[n]}$ which do not contain a copy of $P$ as a subposet for any $n$.   Burcsi and Nagy determined the exact value of $\La(n,P)$ for  infinitely many posets $P$ for which $e(P) = \frac{\abs{P} + h(P)}{2} -1$.  For all these posets we have equality in Theorem \ref{oddcase}. In the cases where $n$ is even or $\frac{\abs{P} + h(P)}{2}$ is not an integer, a similar bound can be obtained, but it is not sharp in general. 
\end{note}

Finally, we give a new proof of Theorem \ref{kchainint} which avoids the usage of profile polytopes. We also classify the cases of equality. 

\begin{theorem}
	\label{kspint}
	Let $\F$ be an intersecting $k$-Sperner family of subsets of $[n]$.  Then,
	\begin{displaymath}
	\abs{\F} \le \Sigma_I(n,k).
	\end{displaymath}
	If $k< \frac{n}{2}$, then equality holds in the following cases:
	\begin{itemize}
		\item For $n$ odd, $\F = \h_{0,n,k}$;
		\item For $n$ even and $k=1$, $\F = \h_{0,n,1}$ or $\h_{x,n,1}$ for some $x \in [n]$;
		\item For $n$ even and $k>1$, $\F = \h_{x,n,k}$ for some $x \in [n]$.
	\end{itemize}
	If $k= \frac{n+1}{2}$, then equality holds if and only if $\F=\h_{0,n,k}$.
\end{theorem}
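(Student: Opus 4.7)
The plan is to apply Mirsky's theorem to decompose $\F$ into $k$ antichains $\A_1,\dots,\A_k$; each $\A_j$, being a subfamily of an intersecting family, is itself an intersecting antichain. Applying Theorem \ref{gkk} to each $\A_j$ and summing over $j$ produces the master LYM-type inequality
\[
\sum_{F \in \F,\, \abs{F} \le n/2} \frac{1}{\binom{n}{\abs{F}-1}} + \sum_{F \in \F,\, \abs{F} > n/2} \frac{1}{\binom{n}{\abs{F}}} \le k. \qquad (\star)
\]
Writing $f_i = \abs{\F \cap \binom{[n]}{i}}$, I would then maximize $\sum_i f_i$ subject to $(\star)$, the level caps $f_i \le \binom{n}{i}$, and, when $n$ is even, the Erd\H{o}s-Ko-Rado cap $f_{n/2} \le \binom{n-1}{n/2-1}$ coming from applying EKR to the intersecting subfamily $\F \cap \binom{[n]}{n/2}$.

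For $n$ odd, the coefficients in $(\star)$ make each of the top $k$ levels $(n+1)/2,\dots,(n+1)/2+k-1$ cost exactly $1$ of the budget $k$ when filled, and the ``sets per unit budget'' rate is strictly largest there; this gives $\abs{\F} \le \Sigma_I(n,k)$, and tracking when every inequality in the chain is tight forces $\F = \h_{0,n,k}$. For $n$ even and $k=1$ the argument collapses into Milner's theorem, whose known extremal families $\h_{0,n,1}$ and $\h_{x,n,1}$ account for the stated ambiguity.

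The main obstacle is the case $n$ even with $k \ge 2$, since the LP from $(\star)$ plus EKR alone is not tight (for instance, at $n=6$, $k=2$ the LP returns $27$ while $\Sigma_I(6,2) = 26$). The missing ingredient is a coupling between $f_{n/2}$ and $f_{n/2+k}$ dictated by the $k$-Sperner condition: if $\F \cap \binom{[n]}{n/2}$ saturates the EKR cap, it must be a star through some element $x$, and if in addition the middle levels $n/2+1,\dots,n/2+k-1$ are full then any $F \in \F$ of size $n/2+k$ containing $x$ would complete a $(k+1)$-chain through the star and the full middle. This forces $\F \cap \binom{[n]}{n/2+k}$ to avoid $x$, so $f_{n/2+k} \le \binom{n-1}{n/2+k}$, closing the LP gap and pinning the extremal family to $\h_{x,n,k}$. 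Making this coupling rigorous without presupposing the extremal structure is the technical heart of the argument; I would do so by a case analysis on whether $f_{n/2}$ is zero, strictly positive but below EKR, or meets EKR, and in each case track the tightness conditions of the GKK applications per Mirsky antichain to read off the equality cases listed in the theorem.
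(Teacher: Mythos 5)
Your route (Mirsky decomposition of $\F$ itself, GKK applied to each of the $k$ intersecting antichains, then a linear program in the level sizes $f_i$) is genuinely different from the paper's, which works interval-by-interval on each cyclic permutation (Lemma \ref{cyclelemma}) and extracts both the bound and the equality cases from structural facts about antichains of intervals and complement-pairing. Your inequality $(\star)$ is correct and does yield $\abs{\F}\le\Sigma_I(n,k)$ for $n$ odd, and you correctly diagnose that the even case needs a coupling between levels $\frac{n}{2}$ and $\frac{n}{2}+k$. But the outline has genuine gaps.

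First, the step ``if $\F\cap\binom{[n]}{n/2}$ saturates the EKR cap, it must be a star through some $x$'' is false precisely at level $\frac{n}{2}$: a maximum intersecting family of $\frac{n}{2}$-sets is any family obtained by choosing one set from each complementary pair, and most of these are not stars (this is exactly why the paper's Note after the theorem warns that for $k>\frac{n}{2}$ the extremal families are far from unique). So the coupling you need cannot be obtained from EKR uniqueness; what is actually required is a cross-containment bound valid for \emph{every} intersecting $\A\subseteq\binom{[n]}{n/2}$ and every $\B\subseteq\binom{[n]}{n/2+k}$ with no $B\supseteq A$, and establishing that is essentially the hard content that the paper's pair-contiguity analysis (Lemma \ref{2levels}) supplies on the cycle. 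Second, even for $n$ odd, the claim that the ``sets per unit budget'' rate is \emph{strictly} largest on the top $k$ levels fails for $k\ge 2$: a set on level $\frac{n-1}{2}$ costs $1/\binom{n}{(n-3)/2}$, which equals the cost $1/\binom{n}{(n+3)/2}$ of a set on level $\frac{n+3}{2}$, so the LP optimum is degenerate and tightness in $(\star)$ alone does not force $\F=\h_{0,n,k}$; one again needs the $k$-Sperner interaction between level $\frac{n-1}{2}$ and the full top levels. Finally, the tightness bookkeeping you propose presupposes a characterization of equality in Theorem \ref{gkk}, which is not available in the paper and would itself have to be proved. As it stands, the proposal establishes the upper bound for $n$ odd, but neither the even upper bound nor any of the equality statements.
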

\begin{note}
In the remaining cases there are several extremal families, and we will not characterize them all. For illustration, we mention a few of them.  If $k>\frac{n}{2}$, the upper bound is $2^{n-1}$. If $n$ is odd and $k>\frac{n+1}{2}$, we may take any intersecting family on level $\frac{n-1}{2}$, take every set on level $\frac{n+1}{2}$ that is not a complement of a set taken earlier, and all complete levels from $\frac{n+3}{2}$ to $n$. If $n$ is even and $k>\frac{n}{2}$ we may take any maximal intersecting family on level $\frac{n}{2}$ (of which there are many) in addition to all complete levels from $\frac{n}{2}+1$ to $n$. Finally if $k=\frac{n}{2}$, the upper bound is $2^{n-1}-1$. We may remove $[n]$ from any of the above families to get an extremal intersecting $\frac{n}{2}$-Sperner family.

\end{note}

The paper is organized as follows. In Section \ref{Sectioncycle} we introduce Katona's cycle method \cite{katona1972simple}  and prove some simple lemmas. In Section \ref{SectionintBfree} we prove Theorem \ref{intBfree} determining the exact value of $\La_I(n,B)$. In Section \ref{ButterflyBollobas} we prove Theorem \ref{gkk2} and Theorem \ref{bollobas2}. In Section \ref{generalP} we prove Theorem~\ref{oddcase} about general posets $P$. Finally, in Section \ref{sectionksp} we prove Theorem \ref{kspint} about intersecting $k$-Sperner families.

\section{Cycle method}
\label{Sectioncycle}

A \emph{cyclic permutation} of $[n]$ (in the sense of Katona~\cite{katona1972simple}) is an arrangement of the numbers $1$ through $n$ along a circle.  Sets of consecutive elements along the circle are called \emph{intervals}. The collection of all intervals along $\sigma$
of size $r$ is denoted $\mathcal{L}^{\sigma}_r$. Most of our proofs will proceed by double counting pairs $(F,\sigma)$ where $F \in \F$ and $\sigma$ is a cyclic permutation. Moreover, we will always assume $\varnothing, [n] \not\in \F$  (we handle the remaining cases separately).  For any collection $\h$ of sets, let $\h^\sigma = \{F:F\in \h \mbox{ and $F$ is an interval along $\sigma$}\}$.   In the double counting we will use the following weight function:

\begin{displaymath}
	w(F,\sigma) = 
	\begin{cases}
	\binom{n}{\abs{F}}, &\mbox{ if $F\in \F$ and $F$ is an interval along $\sigma$} \\
	0, &\mbox{ otherwise}.
	\end{cases}
\end{displaymath}
Observe that, on the one hand, we have
\begin{displaymath}
	\sum_{F \in \F} \sum_\sigma w(F,\sigma) =\sum_{F \in \F} \abs{F}!(n-\abs{F})! \binom{n}{\abs{F}} = n! \abs{\F}.
\end{displaymath}
On the other hand,
\begin{displaymath}
	\sum_\sigma \sum_{F \in \F} w(F,\sigma) = \sum_\sigma \sum_{F \in \F^\sigma} \binom{n}{\abs{F}}.
\end{displaymath}

For notational simplicity we will often work with the simplest case of a cyclic permutation where the numbers $1,2,\dots,n$ occur in that order. We call this cyclic permutation the \emph{canonical cyclic permutation}.  It is clear that when we are working with one fixed cyclic permutation we may assume it is canonical because renaming the elements will not change the intersection or containment structure of its intervals. Let $A_i^j$ denote the interval $\{i,i+1,\dots,i+j-1\}$ (addition involving the base set is always taken modulo $n$ except when the result is $0 \bmod n$ which we take to be $n$) where $i$ is called the first element of $A_i^j$ and $i+j-1$ is called the last element of $A_i^j$. We can partition all intervals along $\sigma$ into chains $\C_1,\dots,\C_n$ where $\C_i = \{\{i\},\{i,i+1\},\dots,\{i,i+1,\dots,i+n-1\}\}$. We call this partition the \emph{canonical chain decomposition}. It will be helpful in proving the following well-known result.

\begin{lemma}
	\label{cycleantichain}
	Let $\G$ be an antichain of intervals along a cyclic permutation $\sigma$, then $\abs{\G} \le n$, and equality holds if and only if $\G=\mathcal{L}^{\sigma}_r$ for some $r$.
\end{lemma}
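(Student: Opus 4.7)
The plan is to use the canonical chain decomposition $\C_1, \dots, \C_n$ introduced just before the lemma statement. Since each interval along $\sigma$ belongs to exactly one $\C_i$ (the chain indexed by its first element), these $n$ chains partition the set of all intervals along $\sigma$. Because $\G$ is an antichain and each $\C_i$ is a chain under containment, $\G$ can contain at most one member of each $\C_i$, which immediately yields $\abs{\G} \le n$.

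For the equality case, assume $\abs{\G} = n$, so $\G$ meets every $\C_i$ in exactly one interval $G_i$ of size $r_i$. First I would dispose of the extremal sizes: if some $r_i = n$, then $G_i = [n]$ is comparable to every other interval, forcing $\abs{\G} = 1$, so (for $n \ge 2$) we may assume $1 \le r_i \le n-1$ for all $i$. Next compare consecutive members $G_i = \{i, i+1, \dots, i+r_i-1\}$ and $G_{i+1} = \{i+1, i+2, \dots, i+r_{i+1}\}$. Since $r_{i+1} \le n-1$, the arc $G_{i+1}$ does not contain $i$, so $G_i \not\subseteq G_{i+1}$. The antichain property therefore forces $G_{i+1} \not\subseteq G_i$, and an elementary check (using that neither arc wraps around the circle) shows this is equivalent to $r_{i+1} \ge r_i$.

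Applying the resulting relation cyclically yields $r_1 \le r_2 \le \cdots \le r_n \le r_1$, so all $r_i$ collapse to a common value $r$, and since $\mathcal{L}^{\sigma}_r$ has exactly $n$ elements for $1 \le r \le n-1$, we conclude $\G = \mathcal{L}^{\sigma}_r$. The main obstacle I anticipate is executing the comparability check between $G_i$ and $G_{i+1}$ cleanly in the cyclic (mod $n$) setting, making sure no wraparound containment is overlooked; this is handled once the extremal sizes $r_i = n$ are ruled out at the start, and the $n=1$ case is trivial.
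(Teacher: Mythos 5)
Your proposal is correct and follows essentially the same route as the paper: both use the canonical chain decomposition to get $\abs{\G}\le n$, and both derive equality by comparing the intervals chosen in consecutive chains $\C_i$ and $\C_{i+1}$ to show the sizes cannot decrease, hence must all coincide. Your explicit handling of the $r_i=n$ case and the cyclic chain of inequalities $r_1\le\cdots\le r_n\le r_1$ is just a slightly more careful write-up of the paper's descent argument.
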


\begin{proof}
We may assume that $\sigma$ is canonical. Let us consider the canonical chain decomposition.  Since at most one interval from each chain may be in our collection, we have that either we take fewer than $n$ intervals or every chain contains exactly one interval from $\G$.  Suppose we are in the latter case and that some two intervals in $\G$ had different sizes.  Then, there must exist chains $\C_i$ and $\C_{i+1}$ where the interval we take in $\C_i$ is larger than the one we take in $\C_{i+1}$.  That is, we have $A_i^{j_1}, A_{i+1}^{j_2} \in \G$ with $j_1>j_2$.  However, this implies that we have $A_{i+1}^{j_2} \subseteq A_i^{j_1}$, a contradiction.
\end{proof}

If we add the additional constraint that the intervals are intersecting and assume that they are of size at most $\frac{n}{2}$, then we have the following better bound (following Katona \cite{katona1972simple}).

\begin{lemma}
	\label{cycleantichain_atmostnhalf}
	Let $\G$ be an intersecting antichain of intervals along a cyclic permutation $\sigma$ where all the intervals are of size at most $\frac{n}{2}$, then $\abs{\G} \le \frac{n}{2}$.
\end{lemma}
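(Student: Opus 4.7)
The plan is to reuse the canonical chain decomposition $\C_1,\dots,\C_n$ from the proof of Lemma~\ref{cycleantichain} and extract the extra mileage afforded by the size restriction $\abs{B}\le n/2$. Since $\G$ is an antichain, each chain $\C_i$ contains at most one interval of $\G$, so the map sending $B\in\G$ to its first element on $\sigma$ is an injection $\G\hookrightarrow[n]$. Writing $S\subseteq[n]$ for its image, it suffices to show $\abs{S}\le n/2$.

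The heart of the argument is the following observation: if both $i$ and $j:=i+\ceil{n/2}\pmod n$ lie in $S$, then the associated intervals $B_i,B_j\in\G$ are forced to be disjoint, which would contradict the intersecting property. Indeed, since $\abs{B_i},\abs{B_j}\le\floor{n/2}$, we have $B_i\subseteq\{i,i+1,\dots,i+\floor{n/2}-1\}$ and $B_j\subseteq\{i+\ceil{n/2},i+\ceil{n/2}+1,\dots,i+n-1\}$, and these two cyclic arcs of length $\floor{n/2}$ are disjoint: they tile the circle exactly when $n$ is even, and are separated by a single position when $n$ is odd. Hence $S\cap(S+\ceil{n/2})=\varnothing$.

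Putting everything together, $S$ and its cyclic shift $S+\ceil{n/2}$ are disjoint subsets of $[n]$ of the same cardinality, so $2\abs{S}\le n$, which gives $\abs{\G}=\abs{S}\le n/2$. There is essentially no obstacle; the only step requiring some care is the disjointness verification in the second paragraph, and in particular checking that the odd case still leaves enough slack, which it comfortably does. One consequence of this tightness analysis is that the above also pins down when equality can hold (both $n$ even and $\G$ a union of $n/2$ antipodal pairs that each contribute exactly one interval), though this is not needed for the stated bound.
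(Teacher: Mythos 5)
Your proof is correct. The key disjointness claim checks out: if $B_i$ starts at $i$ and $B_j$ starts at $j=i+\ceil{n/2}$, then since both have size at most $\floor{n/2}$ they sit inside the two disjoint arcs $\{i,\dots,i+\floor{n/2}-1\}$ and $\{i+\ceil{n/2},\dots,i+n-1\}$, contradicting the intersecting hypothesis; and the injectivity of the first-element map does follow from the antichain hypothesis via the canonical chains $\C_i$, exactly as you say. Your route is, however, genuinely different from the paper's. The paper anchors on a single interval $A_1^k\in\G$, observes (using both the antichain and intersecting properties) that every other member of $\G$ has its first or last element in $A_1^k$, and then pairs the slot ``last element $i$'' against ``first element $i+1$'', noting the size bound forbids both slots from being filled; this gives $\abs{\G}\le 1+(k-1)=k\le n/2$. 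Your argument instead records only first elements and runs a global shift argument: the image $S$ is disjoint from $S+\ceil{n/2}$, so $2\abs{S}\le n$. Both are Katona-style circle arguments, but yours is more symmetric and avoids any anchoring or case analysis, while the paper's version delivers the slightly sharper conclusion $\abs{\G}\le\min_{G\in\G}\abs{G}$ (not needed for this lemma or its applications). Your parenthetical equality analysis is a harmless aside; only the bound itself is used later (in the proofs of Theorem \ref{intBfree} and Lemma \ref{cyclelemma}), and for that your argument is complete.
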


\begin{proof}
Suppose without loss of generality that the interval $A_1^k = \{1,2,\dots,k\}$ is in  $\G$. Since $\G$ is intersecting, every interval of $\G$ has either its first element or its last element in $A_1^k$. Also notice that if $i \in \{1,2,\dots,k\}$ is the last element of an interval of $\G$, $i+1$ cannot be the first element of another interval of $\G$ since all the intervals are of size at most $\frac{n}{2}$. Therefore, the total number of intervals in $\G$ is at most $1 + (k - 1) = k \le \frac{n}{2}$, as desired.
\end{proof}

Let $\sigma$ be canonical, and $\G$ be a collection of intervals along $\sigma$. If $\G$ contains only intervals of size $j$ of the form $A_i^j,A_{i+1}^j,\dots, A_{i+s}^j$ for some $0 \le s \le n-1$, then we say that $\G$ is \emph{contiguous}.  If $\G$ is a collection consisting of intervals $A_i^j,A_{i+1}^j,\dots,A_{i+s}^j,A_{i+s+1}^{j+1},A_{i+s+2}^{j+1},\dots,A_{i-2}^{j+1}$ for some $0 \le s \le n-3$, then we say $\G$ is \emph{pair-contiguous}. We will refer to the intervals  $A_i^j,A_{i+1}^j,\dots,A_{i+s}^j$ as the \emph{lower intervals} in $\G$ and the intervals $A_{i+s+2}^{j+1},A_{i+s+3}^{j+1}\dots,A_{i-2}^{j+1}$ as the \emph{upper intervals} in $\G$.  Equivalently, $\G$ is pair-contiguous if it is an antichain, has size $n-1$, and is the union of two contiguous collections of intervals spanning two consecutive levels.    We extend these definitions to arbitrary cyclic permutations in the obvious way.

\begin{lemma}
	\label{2levels}
	If $\G$ is an antichain of intervals along a cyclic permutation $\sigma$ such that $\abs{\G} = n-1$ and $\G$ contains intervals of at least two sizes,  then $\G$ is pair-contiguous.
\end{lemma}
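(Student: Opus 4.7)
The plan is to adapt the canonical chain decomposition used in the proof of Lemma~\ref{cycleantichain}. Assume $\sigma$ is the canonical cyclic permutation, and consider the chains $\C_1,\dots,\C_n$ partitioning all intervals. Since $\G$ is an antichain of size $n-1$, each chain contributes at most one interval, so exactly one chain—call it $\C_{i_0}$—contributes no interval to $\G$, while every other $\C_i$ contributes a unique interval $A_i^{j_i}\in\G$. My first step would be to show that the sizes are weakly increasing along the cycle starting from $\C_{i_0+1}$: if $i$ and $i+1$ both differ from $i_0$ and $j_i>j_{i+1}$, then exactly as in the proof of Lemma~\ref{cycleantichain} we would get $A_{i+1}^{j_{i+1}} \subsetneq A_i^{j_i}$, violating the antichain property. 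Hence
\[
j_{i_0+1} \le j_{i_0+2} \le \dots \le j_{i_0-1}.
\]

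The heart of the argument, and the step I expect to be the main obstacle, is to show that the total spread $j_{i_0-1}-j_{i_0+1}$ is at most $1$. My approach is to compare directly the two ``extreme'' intervals flanking the missing chain on the cycle:
\[
A_{i_0+1}^{j_{i_0+1}} = \{i_0+1, \dots, i_0 + j_{i_0+1}\}, \qquad A_{i_0-1}^{j_{i_0-1}} = \{i_0-1, i_0, i_0+1, \dots, i_0 + j_{i_0-1} - 2\}.
\]
If $j_{i_0-1} \ge j_{i_0+1} + 2$, then direct inspection shows $A_{i_0+1}^{j_{i_0+1}} \subseteq A_{i_0-1}^{j_{i_0-1}}$, contradicting the antichain property. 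So $j_{i_0-1} - j_{i_0+1} \le 1$, and since $\G$ contains intervals of at least two distinct sizes by hypothesis, the sequence must take exactly two consecutive values $j$ and $j+1$. The subtle point here is that the single break in the chain of constraints (caused by the absence of $\C_{i_0}$) is exactly where we have the most slack, and the pair $(\C_{i_0+1},\C_{i_0-1})$ is precisely the pair whose containment condition becomes tight when the spread is $2$.

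Finally, a weakly increasing sequence taking only two values has a unique transition point: there is some $s \in \{0,1,\dots,n-3\}$ with $j_{i_0+1}=\dots=j_{i_0+s+1}=j$ and $j_{i_0+s+2}=\dots=j_{i_0-1}=j+1$. Setting $i = i_0+1$ in the definition of pair-contiguous gives exactly the listing $A_i^j,A_{i+1}^j,\dots,A_{i+s}^j,A_{i+s+1}^{j+1},\dots,A_{i-2}^{j+1}$, so $\G$ is pair-contiguous, completing the plan.
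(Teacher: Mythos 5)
Your proposal is correct and follows essentially the same route as the paper: the canonical chain decomposition, the observation that exactly one chain contributes no interval, weak monotonicity of the sizes along the cycle, and a containment comparison across the gap (between $A_{i_0-1}^{j_{i_0-1}}$ and $A_{i_0+1}^{j_{i_0+1}}$) to cap the spread at one. The only cosmetic difference is that you locate the empty chain directly from $\abs{\G}=n-1$, whereas the paper finds it adjacent to a minimal-size interval not preceded by another minimal interval; the substance is identical.
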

\begin{proof}
	Assume that $\sigma$ is canonical.  Let us consider the canonical chain decomposition. Let $\G_{min}$ be the collection of those intervals in $\G$ of minimum size, say $j^{*}$. Since $\G_{min}$ is not a full level, there must be an $i$ such that $A_i^{j^*} \in  \G_{min}$ but $A_{i-1}^{j^*} \not\in \G_{min}$. Then, we know that $\C_{i-1}$ has no interval from $\G$, and if $\abs{\G}=n-1$ it must be that each chain $\C_i,\C_{i+1},\dots,\C_{i-2}$ contains an interval from $\G$.  Observe that if $\G$ contains an interval of size $j_1$ in $\C_{i_1}$ and an interval of size $j_2$ in $\C_{i_1+1}$, then $j_1 \le j_2$ for otherwise we would not have an antichain.  Finally, the interval from $\G$ in $\C_{i-2}$ must have size $j^*+1$ for if it were any larger it would contain $A_i^{j^*}$.   It follows that $\G$ is a pair-contiguous family contained in levels $j^*$ and $j^*+1$.
\end{proof}

We call a member of $\G$ isolated, if it is comparable with no other member of $\G$.

\begin{lemma}\label{isol}
Let $\G$ be a $2$-Sperner family of intervals on a cyclic permutation with $I$ isolated intervals, then there are at most $2n-I$ intervals in $\G$.
\end{lemma}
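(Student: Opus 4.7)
The plan is to prove this via the canonical chain decomposition introduced just before Lemma~\ref{cycleantichain}. Assuming $\sigma$ is canonical, I consider the $n$ chains $\C_1,\dots,\C_n$, where $\C_i$ consists of all intervals starting at position $i$. Each nontrivial interval lies in exactly one such chain, so these chains partition the intervals under consideration. Since any two intervals in the same chain $\C_i$ are comparable and $\G$ contains no $3$-chain, each $\C_i$ meets $\G$ in at most $2$ elements, which already yields the weak bound $\abs{\G} \le 2n$.

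The improvement comes from a simple observation: if an isolated interval $F \in \G$ lies in some chain $\C_i$, then no other member of $\G$ can lie in $\C_i$, since any such member would be comparable to $F$ and violate the isolation of $F$. In particular, distinct isolated intervals occupy distinct chains. Therefore, at least $I$ of the chains $\C_1,\dots,\C_n$ contribute only a single element to $\G$, while the remaining $n - I$ chains contribute at most $2$ each. Summing gives $\abs{\G} \le I + 2(n - I) = 2n - I$, as required.

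The one subtlety I would address explicitly is the possible presence of $[n]$ in $\G$, since $[n]$ sits (at the top of) every chain under the stated decomposition and would be counted more than once. But $[n]$ is comparable to every other interval, so if $[n] \in \G$ then either $\G = \{[n]\}$ (where $I = 1$ and the bound is immediate), or $I = 0$ and $\G \setminus \{[n]\}$ is an antichain of intervals of size at most $n$ by Lemma~\ref{cycleantichain}, yielding $\abs{\G} \le n + 1 \le 2n - I$. Thus this edge case is handled in a single line and the main chain-partition counting covers all other cases; the entire proof is short, and the only real content is the observation that each isolated interval occupies a full chain on its own.
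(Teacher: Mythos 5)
Your proof is correct and follows the same argument as the paper: the canonical chain decomposition, the observation that each isolated interval monopolizes its chain, and the count $I + 2(n-I)$. Your extra care about $[n]$ lying in every chain is a reasonable precaution the paper elides (it assumes $[n]\not\in\F$ globally), but it does not change the approach.
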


\begin{proof}
Consider the canonical chain decomposition. Each isolated interval is found on a different one of the chains.   The remaining chains can have at most 2 intervals each.  It follows that the total number of intervals is at most $I + 2(n-I) = 2n-I$.
\end{proof}

\begin{lemma}
\label{isol_improvement}
Let $\G$ be a $2$-Sperner family of intervals on a cyclic permutation with $I$ isolated intervals, where $1 \le I \le n-1$.  Then, there are at most $2n-I-1$ intervals in total.  
\end{lemma}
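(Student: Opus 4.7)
My plan is to refine the counting argument of the previous lemma by analysing the equality case. The first step is to partition the $n$ chains of the canonical chain decomposition according to their intersection with $\G$: let $R$ be the number of empty chains, $S$ the number of chains containing a single interval of $\G$ which is \emph{not} isolated, $I$ the number containing a single isolated interval (any two intervals on the same chain are automatically comparable, so isolated intervals lie on distinct chains), and $T$ the number containing two intervals. Then $R+S+I+T=n$ and $\abs{\G}=S+I+2T=2n-I-S-2R$, so the hypothesis $\abs{\G}=2n-I$ forces $S=R=0$: every chain is either a \emph{singleton chain} carrying one isolated interval, or a \emph{pair chain} carrying two comparable intervals.

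I would then assume for contradiction that $\abs{\G}=2n-I$ with $1\le I\le n-1$, so both chain types are present along a canonical $\sigma$. The main technical input is the elementary containment rule that, for adjacent chains $\C_i,\C_{i+1}$ and sizes $1\le j_1,j_2\le n-1$, one has $A_{i+1}^{j_2}\subseteq A_i^{j_1}$ if and only if $j_2<j_1$, while the reverse containment never holds. Writing $a_i<b_i$ for the two sizes on a pair chain $\C_i$ and $e_i$ for the size on a singleton chain $\C_i$, a short case check on the possible $3$-chains among the (at most four) intervals on two consecutive chains, together with the incomparability forced on any isolated interval, yields the following transition inequalities: pair-to-pair gives $a_{i+1}\ge a_i$ and $b_{i+1}\ge b_i$; pair-to-singleton gives $e_{i+1}\ge b_i$; singleton-to-pair gives $a_{i+1}\ge e_i$; singleton-to-singleton gives $e_{i+1}\ge e_i$.

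Setting $\mu_i$ equal to the minimum size of an interval of $\G$ on $\C_i$ (so $\mu_i=a_i$ on pair chains and $\mu_i=e_i$ on singleton chains), the four inequalities above give $\mu_{i+1}\ge\mu_i$ for every $i$, with \emph{strict} inequality at every pair-to-singleton transition, since then $\mu_{i+1}=e_{i+1}\ge b_i>a_i=\mu_i$. Because both chain types occur on the cycle, at least one such pair-to-singleton transition must exist; yet the telescoping sum $\sum_{i=1}^n(\mu_{i+1}-\mu_i)=0$ around the cycle. This contradiction yields $\abs{\G}\le 2n-I-1$.

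The hard part is identifying the correct monotone cycle invariant: one must use the \emph{minimum} size on each chain, so that the strict jump lands on the transition type that is forced to occur. The dual choice of tracking the maximum size works symmetrically, with the strict jump instead forced at every singleton-to-pair transition. Once the right $\mu_i$ is pinned down, the remaining verifications are elementary from the explicit adjacent-interval containment rule and the definition of ``isolated.''
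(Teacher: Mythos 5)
Your proof is correct and follows essentially the same route as the paper: the same reduction to the case where every chain of the canonical decomposition carries either two comparable intervals or one isolated interval, followed by a contradiction at a transition from a two-interval chain to an isolated chain. The only difference is cosmetic: where you track the minimum size $\mu_i$ on each chain and telescope around the cycle, the paper applies Lemma \ref{cycleantichain} to the antichain of inclusion-minimal intervals to force uniformity and then exhibits the same forbidden transition.
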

\begin{proof}
Consider the canonical chain decomposition.  Assume first that there are $\ell \ge 1$ chains containing either no interval from our family or one interval which is not isolated. Then, the total number of intervals is at most $I + \ell + 2(n-I-\ell) \le I + 1 + 2(n-I-1) = 2n - I -1$.  Indeed, $I+\ell$ chains have at most one interval, and the remaining chains have at most $2$.  Thus, we may assume that every chain contains either two intervals or an isolated interval.   Let $\A$ be the set of inclusion minimal intervals.  Since every chain has two intervals or an isolated interval, $\A$ consists of isolated intervals and the smaller intervals from chains with two intervals (as these are obviously minimal).  Thus, $\abs{\A} = n$, and it follows that $\A$ consists of intervals of only one size.  This yields a contradiction since we know we will have a chain with two intervals from $\A$ followed by an isolated interval (since $1 \le I \le n-1$), but this isolated interval will be contained in the larger interval from the previous chain.
\end{proof}

\section{Intersecting $B$-free families}
\label{SectionintBfree}

In this section we prove Theorem \ref{intBfree} by determining the exact value of $\La_I(n,B)$ and classifying the extremal families.   We may assume that $[n] \not\in \F$.  Indeed, if $[n] \in \F$, then $\F \setminus \{[n]\}$ contains no three sets $A,B,C$ with $A,B \subset C$.  In this case, Theorem \ref{thm:KT} applied to the family of complements shows that such a family may have size at most $(1 + \frac{2}{n}) \binom{n}{\floor{\frac{n}{2}}}$.  Thus, for $n \ge 7$ the family will be too small.

Let  $\F_m = \{F \in \F : \exists A, B \in \F \text{ such that } A \subset F \subset B\}$ (notice that $A$ and $B$ are unique since $\F$ is butterfly-free).  We refer to $\F_m$ as the collection of \emph{middle} sets in $\F$.  Fix a cyclic permutation~$\sigma$.  We will distinguish four kinds of intervals in $\F^\sigma$ which we refer to as the \emph{middle, isolated, top} and \emph{bottom} intervals along $\sigma$.
\begin{align*}
\M_\sigma &= \{F:F \in \F^\sigma \mbox{ and there exists } A,B \in \F^\sigma \mbox{ such that } A \subset F \subset B\}; \\
\I_\sigma &= \{F:F \in \F^\sigma \mbox{ and $F$ is comparable with no other interval in $\F^\sigma$}\}; \\
\T_\sigma &= \{F:F \in \F^\sigma \setminus \I_\sigma \mbox{ is inclusion maximal in } \F^\sigma\};\\
\B_\sigma &=\{F:F \in \F^\sigma \setminus  \I_\sigma \mbox{ is inclusion minimal in } \F^\sigma\}.
\end{align*}
It is easy to see that these four collections of intervals form a partition of $\F^\sigma$.  Importantly, note that the four collections are defined by their properties as intervals along $\sigma$, not in $\F$ itself.  So we may have, for example, a set $F \in \F_m$ which is an interval along $\sigma$, but does not belong to $\M_\sigma$.

For any $F \in \F$, let $\alpha_F$ be the number of cyclic permutations containing $F$ as a middle interval and $\beta_F$ be the number of cyclic permutations containing $F$ as an isolated interval.    Our proof considers the tradeoffs associated with these two possibilities.  We will need to know the relative frequency with which they occur.  To this end, define
\begin{equation*}
c = \max_{F \in \F_m} \frac{\alpha_F}{\beta_F}.
\end{equation*}

For a fixed cyclic permutation $\sigma$, let $m_\sigma,i_\sigma,t_\sigma$ and $b_\sigma$ denote the weight of the collections $\M_\sigma,\I_\sigma,\T_\sigma$ and $\B_\sigma$ respectively.  Define
\begin{displaymath}
R =  n \Sigma_I(n,2) =
\begin{cases}
n \binom{n}{\floor{\frac{n}{2}}+1} + n \binom{n}{\floor{\frac{n}{2}}+2}, &\mbox{if $n$ is odd}\\
 \frac{n}{2} \binom{n}{\frac{n}{2}} + n \binom{n}{\frac{n}{2}+1} + \left(\frac{n}{2}-2 \right) \binom{n}{\frac{n}{2}+2}, &\mbox{if $n$ is even.}
 \end{cases}
\end{displaymath}
Thus, our aim is to show $\abs{\F} \le R/n$.

\begin{lemma}
\label{mainlemma}
If for each cyclic permutation $\sigma$ we have $t_{\sigma} + b_{\sigma}+ (1 + c) i_{\sigma} \le R$, then $\abs{\F} \le R/n$.
\end{lemma}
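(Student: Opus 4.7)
My plan is to apply the double counting developed in Section~\ref{Sectioncycle}, which gives $\sum_{F\in\F}\sum_\sigma w(F,\sigma) = n!\abs{\F}$. Evaluating the same quantity in the other order and using the partition $\F^\sigma = \M_\sigma \sqcup \I_\sigma \sqcup \T_\sigma \sqcup \B_\sigma$ yields
$$n!\abs{\F} = \sum_\sigma \bigl(m_\sigma + i_\sigma + t_\sigma + b_\sigma\bigr).$$
Since the hypothesis controls $(1+c)i_\sigma + t_\sigma + b_\sigma$ on each cyclic permutation but contains no $m_\sigma$ term, the main step will be to absorb $\sum_\sigma m_\sigma$ into $\sum_\sigma i_\sigma$ globally using the definition of $c$. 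This is the only place where the relationship between middle sets and isolated intervals enters, so I expect it to be the heart of the argument.

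The key observation is that whenever $F \in \M_\sigma$, there exist $A,B \in \F^\sigma \subseteq \F$ with $A \subset F \subset B$, so $F \in \F_m$. Consequently $\alpha_F = 0$ for every $F \notin \F_m$. Interchanging the order of summation and then applying the definition $c = \max_{F\in\F_m}\alpha_F/\beta_F$ term by term gives
$$\sum_\sigma m_\sigma \;=\; \sum_{F \in \F_m} \alpha_F \binom{n}{\abs{F}} \;\le\; c \sum_{F \in \F_m} \beta_F \binom{n}{\abs{F}} \;\le\; c \sum_{F \in \F} \beta_F \binom{n}{\abs{F}} \;=\; c\sum_\sigma i_\sigma.$$
The choice of weights $w(F,\sigma) = \binom{n}{\abs{F}}$ is precisely what makes the first and last equalities here transparent, since both sides become simple counts weighted by $\alpha_F$ or $\beta_F$.

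Substituting the bound on $\sum_\sigma m_\sigma$ back into the double-counting identity yields
$$n!\abs{\F} \;\le\; \sum_\sigma \bigl((1+c)\,i_\sigma + t_\sigma + b_\sigma\bigr).$$
Applying the hypothesis to each of the $(n-1)!$ cyclic permutations gives $n!\abs{\F} \le (n-1)!\,R$, and dividing by $n!$ produces the desired bound $\abs{\F} \le R/n$. The only mild subtlety is that $c$ must be finite, which requires $\beta_F \ge 1$ for every $F \in \F_m$; this is easy to arrange since any proper, nonempty $F$ appears as an interval in many cyclic permutations, and one can further place the remaining elements so that $F$ is isolated. This housekeeping is not the obstacle—the substantive content of the lemma is the absorption step in the middle paragraph.
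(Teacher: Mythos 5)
Your proof is correct and follows essentially the same route as the paper's: the same double count $\sum_\sigma\sum_{F\in\F^\sigma}\binom{n}{|F|}=n!\,|\F|$, the same partition into $m_\sigma+i_\sigma+t_\sigma+b_\sigma$, and the same absorption of $\sum_\sigma m_\sigma$ into $c\sum_\sigma i_\sigma$ via $\alpha_F\le c\beta_F$; you merely interchange the order of summation before invoking the hypothesis rather than after. No gaps.
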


\begin{proof}
It suffices to show that
\begin{equation*}
n! \abs{\F} = \sum_{\sigma} \sum_{F \in \F^{\sigma}} \binom {n}{\abs{F}} \le (n-1)! R.
\end{equation*}
For a given $\sigma$ we have
\begin{equation}
\label{keyeq}
\sum_{F \in \F^{\sigma}} \binom {n}{\abs{F}} = t_{\sigma} + b_{\sigma} + i_{\sigma} + m_{\sigma} \le R + m_{\sigma} - c i_{\sigma}.
\end{equation}
Summing both sides of \eqref{keyeq} over all cyclic permutations, we get 
\begin{equation*}
\sum_{\sigma} \sum_{F \in \F^{\sigma}} \binom {n}{\abs{F}} \le \sum_{\sigma}\left( R + m_{\sigma} - c i_{\sigma}\right) = (n-1)!R + \sum_{F \in \F_m} {(\alpha_F - c \beta_F) \binom{n}{\abs{F}} } - \sum_{F \not \in \F_m} c \beta_F \binom{n}{\abs{F}}.
\end{equation*}
Now, since for every $F \in \F$ we have $\alpha_F - c \beta_F \le 0$ (by the definition of $c$), our lemma follows.
\end{proof}


\begin{lemma}\label{lem7}
\label{cestimate}
If $\F$ is $B$-free and contains only sets of size at least $2$ and at most $n-2$, then for each $F \in \F_m$ we have
\begin{displaymath}
\frac{\beta_F}{\alpha_F} \ge \frac{\abs{F}(n-\abs{F})}{4} - \frac{n}{2} + 1.
\end{displaymath}
\end{lemma}

\begin{proof}
Assume that $A  \subset F \subset B$.  The number of cyclic permutations containing $A,F$ and $B$ is
\begin{displaymath}
\alpha_F = \abs{A}!(\abs{F}-\abs{A}+1)!(\abs{B}-\abs{F}+1)!(n-\abs{B})!.
\end{displaymath}
The number of cyclic permutations containing only $F$ is (by inclusion/exclusion)
\begin{displaymath}
\beta_F = \abs{F}!(n-\abs{F})! - \abs{A}!(\abs{F}-\abs{A}+1)!(n-\abs{F})! - \abs{F}!(\abs{B}-\abs{F}+1)!(n-\abs{B})! + \alpha_F.
\end{displaymath}
So we have 

\begin{dmath*}
\frac{\beta_F}{\alpha_F} = 1 + \frac{\abs{F}!(n-\abs{F})!}{\abs{A}!(\abs{F}-\abs{A}+1)!(\abs{B}-\abs{F}+1)!(n-\abs{B})!} - \frac{(n-\abs{F})!}{(\abs{B}-\abs{F}+1)!(n-\abs{B})!} - \frac{\abs{F}!}{\abs{A}!(\abs{F}-\abs{A}+1)!} =  \left( \frac{(n-\abs{F})!}{(\abs{B}-\abs{F}+1)!(n-\abs{B})!} - 1 \right)  \left( \frac{\abs{F}!}{\abs{A}!(\abs{F}-\abs{A}+1)!} - 1 \right) 
\ge \min_{B} \left( \frac{(n-\abs{F})!}{(\abs{B}-\abs{F}+1)!(n-\abs{B})!} - 1\right) \cdot \min_{A}  \left( \frac{\abs{F}!}{\abs{A}!(\abs{F}-\abs{A}+1)!} - 1 \right). 
\end{dmath*}
The first term is minimized by taking $\abs{B} = \abs{F}+1$, and the second term is minimized by taking $\abs{A} = \abs{F}-1$.  By substituting these values in the inequality above, we get

\begin{align*}
\frac{\beta_F}{\alpha_F} &\ge \left(\frac{n-\abs{F}}{2} - 1\right) \left(\frac{\abs{F}}{2} - 1\right) \\
&= \frac{\abs{F}(n-\abs{F})}{4} - \frac{n}{2} + 1.  \qedhere
\end{align*}
\end{proof}

\begin{note}
\label{boundingc}
If the middle sets in $\F$ all have size at least $3$ and at most $n-3$, then for each $F \in \F_m$,
\begin{displaymath}
\frac{\beta_F}{\alpha_F} \ge \frac{\abs{F}(n-\abs{F})}{4} - \frac{n}{2} + 1 \ge \frac{n - 5}{4}.
\end{displaymath}
Therefore,
\begin{displaymath}
c = \max_{F \in \F_m} \frac{\alpha_F}{\beta_F} \le  \frac{4}{n-5}.
\end{displaymath}
\end{note}

\begin{lemma}\label{lem8}
If $\F$ is $B$-free and contains a set of size $1$ or  $n-1$, then $\abs{\F} < \Sigma_I(n,2)$ for $n \ge 17$.
\end{lemma}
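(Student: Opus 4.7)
The hypothesis splits into two cases depending on whether $\F$ contains a singleton or a coatom, and we may treat these separately (if a singleton is present we handle Case 1 and we are done).

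\emph{Case 1: $\{y\} \in \F$.}  By intersectingness, every set of $\F$ contains $y$.  I claim $\F' := \F \setminus \{\{y\}\}$ is $V$-free.  Suppose to the contrary that $A, B, C \in \F'$ with $A \subset B$, $A \subset C$ and $B, C$ incomparable.  Since $\{y\} \subset B$ and $\{y\} \subset C$ (as $y \in B \cap C$), the injection sending the two bottom elements of $B$ to $\{y\}$ and $A$ and the two top elements to $B$ and $C$ embeds the butterfly poset in $\F$; the subposet definition allows $\{y\} \subset A$ in $\F$ because the implication is required only in one direction.  By Theorem~\ref{thm:KT},
\[
|\F| \le 1 + \left(1 + \frac{2}{n}\right) \binom{n}{\lfloor n/2 \rfloor},
\]
and a direct computation shows the right-hand side is strictly less than $\Sigma_I(n, 2)$ for $n \ge 18$: in the odd case $\Sigma_I(n,2) = \binom{n}{\lfloor n/2 \rfloor} + \binom{n}{\lfloor n/2 \rfloor + 2}$, and the term $\binom{n}{\lfloor n/2 \rfloor + 2}$ comfortably exceeds $1 + (2/n)\binom{n}{\lfloor n/2 \rfloor}$; the even case is analogous using the explicit formula for $\Sigma_I(n,2)$.

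\emph{Case 2: $F := [n] \setminus \{y\} \in \F$ and $\F$ has no singleton.}  Now intersectingness only excludes $\emptyset$ and $\{y\}$, so the direct $V$-free reduction of Case 1 fails.  Partition $\F \setminus \{F\} = \G_1 \sqcup \G_2$ according to whether the set contains $y$, so $\G_2 \subseteq 2^F$.  The $B$-freeness of $\F$ together with $F$ yields the \emph{cross-constraint}: for every $C \in \G_1$, at most one element of $\G_2$ is a proper subset of $C$.  If two distinct $A, A' \in \G_2$ satisfied $A, A' \subsetneq C$, they would realize an embedded butterfly together with $F$: $A, A' \subsetneq F$ holds automatically (neither contains $y$), and $F, C$ are incomparable since $y \in C \setminus F$ forbids $C \subseteq F$, while $|F| = n-1$ together with $[n] \notin \F$ forbids $F \subsetneq C$.

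To finish Case 2 I would combine three ingredients: (i) $|\G_1| \le \Sigma(n-1, 2)$, obtained by applying Theorem~\ref{thm:DKS} to the $B$-free image $\{G \setminus \{y\} : G \in \G_1\} \subseteq 2^{[n-1]}$; (ii) $|\G_2| \le \Sigma(n-1, 2)$, viewing $\G_2 \subseteq 2^F$ as a $B$-free family; and (iii) the cross-constraint, using the decomposition $\G_2 = \G_2^{\mathrm{capt}} \sqcup \G_2^{\mathrm{free}}$, where $\G_2^{\mathrm{capt}}$ is those $A$ lying below some $C \in \G_1$.  The cross-constraint embeds $\G_2^{\mathrm{capt}} \hookrightarrow \G_1$ (so $|\G_2^{\mathrm{capt}}| \le |\G_1|$), while $\G_2^{\mathrm{free}}$ is pairwise incomparable with $\G_1$, making $\G_1 \cup \G_2^{\mathrm{free}}$ a cross-incomparable $B$-free family on $[n]$.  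The \emph{main obstacle} is that naive combinations such as $1 + 2\Sigma(n-1, 2)$ are too loose; sharpening requires exploiting both the injection from (iii) and the cross-incomparable structure of $\G_1 \cup \G_2^{\mathrm{free}}$, which I plan to handle by a cycle-method double count on cyclic permutations of $[n]$ (noting that $F$ is an interval of every such permutation), followed by a numerical check verifying the resulting bound is strictly below $\Sigma_I(n, 2)$ for $n \ge 18$.
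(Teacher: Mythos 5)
Your Case 1 is correct and complete: when $\{y\}\in\F$, intersectingness forces $y$ into every member, the butterfly embedding shows $\F\setminus\{\{y\}\}$ is $V$-free, and Theorem~\ref{thm:KT} plus the numerical comparison with $\Sigma_I(n,2)$ closes it. This is in fact a more direct use of the hypotheses than the paper, which handles the singleton case by complementation from the coatom case (deliberately avoiding the intersecting property).

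Case 2 has a genuine gap. You set up essentially the paper's decomposition ($\G_2=$ the subsets of $F=[n]\setminus\{y\}$, $\G_1=$ the sets containing $y$), but you stop at the plan stage: you concede that $1+2\Sigma(n-1,2)$ is too weak and defer the repair to an unexecuted ``cycle-method double count,'' which is not a proof. The missing observation is that your cross-constraint argument applies with $C$ ranging over \emph{all} of $\F\setminus\{F\}$, not just $\G_1$: if $A,A'\in\G_2$ are distinct and both properly contained in some $C\in\F\setminus\{F\}$, then $A,A'\subsetneq C$ and $A,A'\subsetneq F$ give a butterfly (the possible relation $C\subset F$ is harmless for the subposet embedding). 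Taking $C\in\G_2$ shows that $\G_2$ itself contains no $\Lambda$, so Theorem~\ref{thm:KT} (applied to the ground set $F$ of size $n-1$, after complementation) gives $\abs{\G_2}\le\bigl(1+\tfrac{2}{n-1}\bigr)\binom{n-1}{\floor{\frac{n-1}{2}}}$ --- roughly half of the $\Sigma(n-1,2)$ you were using. Combined with your ingredient (i), $\abs{\G_1}\le\Sigma(n-1,2)=2\binom{n-1}{\floor{\frac{n-1}{2}}}$ (up to the second binomial coefficient), the total is about $3\binom{n-1}{\floor{\frac{n-1}{2}}}$, which is strictly below $\Sigma_I(n,2)$ for $n\ge 18$; no cycle-method argument is needed. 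This is exactly the paper's route, and without this strengthening of the cross-constraint your Case 2 does not close.
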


\begin{proof}

Assume that $\F$ contains a set of size $n-1$, say $S$.  We define two subfamilies of $\F$.  Denote by $\F_1$ the family of those sets in $\F$ which are properly contained in $S$ and set $\F_2 = \F \setminus (\F_1 \cup \{S\})$.  Since $\F$ is $B$-free, it follows that $\F_1$ has no three sets $A,B,C$ with $A,B \subset C$.  Thus, using Theorem~\ref{thm:KT} applied to an $n-1$ element ground set we have 
\begin{displaymath}
\abs{\F_1} \le \left( 1+\frac{2}{n-1} \right )\binom{n-1}{\floor{\frac{n-1}{2}}}.
\end{displaymath}
 Since every set in $\F_2$ contains a fixed element, $x$, we can delete $x$ from each element of $\F_2$ to form a new family $\F_2' = \{F\setminus \{x\}:F\in \F_2\}$.  Clearly $\F_2'$ is also $B$-free and $\abs{\F_2} = \abs{\F_2'}$.  By Theorem \ref{thm:DKS} applied to an $n-1$ element ground set, we have
 \begin{displaymath}
 \abs{\F_2} = \abs{\F_2'} \le \binom{n-1}{\floor{\frac{n-1}{2}}} + \binom{n-1}{\floor{\frac{n-1}{2}}+1}.
 \end{displaymath}
One can easily verify this implies $\abs{\F} = \abs{\F_1} + \abs{\F_2} + 1< \Sigma_I(n,2)$ for $n \ge 17$.

If $\F$ contains a set of size $1$, a similar proof works by symmetry (note that we do not use the intersecting property here).
\end{proof}

We will use the following special case of Lemma \ref{cyclelemma}, which will be proved in Section \ref{sectionksp}:

\begin{lemma}\label{lem9}
\label{2sperner_cycle}
Let $\G$ be an intersecting $2$-Sperner collection of intervals along a cyclic permutation $\sigma$, then
\begin{equation}
\label{cycleeq}
\sum_{G \in \G} \binom{n}{\abs{G}} \le n \Sigma_I(n,2).
\end{equation}
Equality holds in \eqref{cycleeq} if and only if:
\begin{itemize}
\item $n$ is odd and $\G = \h_{0,n,2}^\sigma$;
\item $n$ is even and $\G=\h_{x,n,2}^\sigma$ for some $x \in [n]$. 
\end{itemize}
\end{lemma}

Now we are ready to prove our main theorem.

\begin{proof}[Proof of Theorem \ref{intBfree}]
Let $\sigma$ be a cyclic permutation.    By Lemma \ref{mainlemma}, it is enough to prove 
\begin{equation}
\label{mainineq1}
t_{\sigma} + b_{\sigma}+ (1 + c) i_{\sigma}~\le R. 
\end{equation}

Note that as $m_{\sigma}$ does not appear in the inequality, we can delete the middle intervals and consider $\F^\sigma_0=\F^\sigma\setminus \M_\sigma$ instead of $\F^\sigma$. The partition to top, bottom and isolated intervals remains the same. As $\F^\sigma_0$ is $2$-Sperner, we have  $t_{\sigma} + b_{\sigma}+ i_{\sigma}~\le R$ by Lemma \ref{2sperner_cycle}. Let $I$ be the number of isolated intervals in $\F^\sigma_0$ (i.e., $I =i_{\sigma}$). If $I = 0$, then we are done.    

Assume that $n$ is even and $I > 0$.   If $I > \frac{n}{2}$, then by Lemma~\ref{isol} and Lemma \ref{isol_improvement}, the total number of intervals along $\sigma$ is less than $\frac{3n}{2}-1$. Since isolated sets form an antichain, $I \le n$ by Lemma~\ref{cycleantichain}. Thus, we get an upper bound for $t_{\sigma} + b_{\sigma}+ (1 + c) i_{\sigma}$ if we take $\frac{3n}{2}-2$ intervals of the largest possible weight. So we take as many isolated sets as possible, namely $n$ sets, since their weight is further multiplied by $1+c$. As $\F^\sigma_0$ is intersecting, there are at most $\frac{n}{2}$ intervals of size $\frac{n}{2}$ (i.e. of maximum weight), and we can take the remaining $n-2$ intervals of size $\frac{n}{2}+1$.

So the maximum value of $t_{\sigma} + b_{\sigma}+ (1 + c) i_{\sigma}$ is at most $\left(\frac{n}{2} \binom{n}{\frac{n}{2}} + \frac{n}{2} \binom{n}{\frac{n}{2}+1}\right)\left(1+c\right) + \left(\frac{n}{2}-2 \right) \binom{n}{\frac{n}{2}+1}$ (note that this family cannot actually occur: if there are $n$ isolated sets, then by Lemma \ref{isol} there cannot be any more intervals in $\F^\sigma_0$). So it is enough to show
\begin{displaymath}
\left(\frac{n}{2} \binom{n}{\frac{n}{2}} + \frac{n}{2} \binom{n}{\frac{n}{2}+1}\right)\left(1+c\right) + \left(\frac{n}{2}-2 \right) \binom{n}{\frac{n}{2}+1} < R.
\end{displaymath}
Simplifying, we get
\begin{displaymath}
\frac{n}{2}\left(\binom{n}{\frac{n}{2}} + \binom{n}{\frac{n}{2}+1}\right) c <  2\binom{n}{\frac{n}{2}+1}+\left(\frac{n}{2}-2\right)\binom{n}{\frac{n}{2}+2}.
\end{displaymath}
This is equivalent to
\begin{displaymath}
\frac{n}{2}\left(\frac{n+2}{n}\binom{n}{\frac{n}{2}+1} + \binom{n}{\frac{n}{2}+1}\right) c <  2\binom{n}{\frac{n}{2}+1}+\left(\frac{n}{2}-2\right)\frac{n-2}{n+4}\binom{n}{\frac{n}{2}+1}.
\end{displaymath}
Now  dividing through by $\binom{n} {\frac{n}{2}+1}$ and rearranging, we get
\begin{displaymath}
c <  \frac{n^2-2n+24}{2n^2+10n+8}.
\end{displaymath}

It is easy to see that if $c \le \frac{4}{n-5}$, then the above inequality holds when $n\ge 17$. On the other hand, by Lemma \ref{lem8} we may assume the conditions of Lemma \ref{lem7} are satisfied, so we can use Note \ref{boundingc} to show $c \le \frac{4}{n-5}$, as required.

Now, consider the case when there are $2 \le I \le \frac{n}{2}$ isolated intervals along $\sigma$.  By Lemma \ref{isol_improvement} it follows that the total number of intervals in $\F^\sigma_0$ is at most $2n-I-1$.
Pairing off intervals with their complements and considering the maximum weight we can obtain with $2n-I-1$ intervals, it is enough to show

\begin{displaymath}
 (1 + c) I \binom{n}{\frac{n}{2}} + \left(\frac{n}{2}-I \right) \binom{n}{\frac{n}{2}} + n \binom{n}{\frac{n}{2} + 1} + \left(\frac{n}{2} -I  -1 \right) \binom{n}{\frac{n}{2} + 2} \le R.
 \end{displaymath}
Simplifying,
\begin{displaymath}
c I \binom{n}{\frac{n}{2}}  \le (I-1) \binom{n}{\frac{n}{2} + 2}. 
\end{displaymath}
Dividing through by $\binom{n} {\frac{n}{2}}$, we get
\begin{displaymath}
 I
\ge
\frac{ \frac{n(n-2)}{(n+2)(n+4)}} {\frac{n(n-2)}{(n+2)(n+4)} - c}.
\end{displaymath}

By Note \ref{boundingc}, we have $c \le \frac{4}{n-5}$.   Substituting this bound for $c$ in the above inequality, we get that the right-hand side is strictly less than $2$ when $n \ge 18$ (note that our assumption $n \ge 17$ is still sufficient since we are in the case when $n$ is even).  

So we may assume that $I = 1$ ($n$ is even) and that the total number of intervals along  $\sigma$ is exactly $2n-2$ (if we have less than $2n-2$ intervals and $I = 1$, it can be checked easily that $t_{\sigma} + b_{\sigma}+ (1 + c) i_{\sigma}< R$ for $n \ge 17$). Let us denote by $\U$ the subfamily of maximal intervals (i.e., those intervals that are not contained in any other interval) in $\F^\sigma_0$ and the subfamily of minimal intervals (i.e., those intervals that do not contain any other interval) by $\D$. Now, if either $\U$ or $\D$ contains $n$ intervals, then, since it is an antichain, by Lemma \ref{cycleantichain} it has to be a complete level $\mathcal{L}^{\sigma}_r$, for some $r$. As the family is intersecting, we have $r> \frac{n}{2}$.


If $\D=\mathcal{L}^{\sigma}_r$, then $\U\setminus \D$ consists of $n-2$ intervals of size at least $\frac{n}{2} + 2$, and a simple calculation shows $t_{\sigma} + b_{\sigma}+ (1 + c) i_{\sigma} < R$ for $n \ge 17$. If $\U=\mathcal{L}^{\sigma}_r$ and $r \ge \frac{n}{2} + 2$, a similar calculation shows again that $t_{\sigma} + b_{\sigma}+ (1 + c) i_{\sigma} < R$ for $n \ge 17$. If $\U=\mathcal{L}^{\sigma}_r$ and $r=\frac{n}{2}+1$, then $\D$ is an intersecting antichain of intervals of size at most $\frac{n}{2}$. Now by Lemma \ref{cycleantichain_atmostnhalf}, we have $|\D|\le \frac{n}{2}$ in this case, contradicting the fact that the total number of intervals is $2n-2$.

So we can assume that both  $\U$ and $\D$ contain at most $n-1$ intervals. Since the interval in $\I_\sigma$ is both maximal and minimal, we have $\abs{\U \cap \D} \ge 1$. But then, the total number of intervals in our 2-Sperner family is $\abs{\U \cup \D} = \abs \U + \abs \D - \abs{\U \cap \D} \le 2n - 3$, a contradiction.

Now let us assume that $n$ is odd and $I > 0$. Similar to the case when $n$ is even, by Lemma \ref{isol_improvement} it follows that the total number of intervals in $\F^\sigma_0$ is at most $2n-I-1$. Since isolated sets form an antichain, $I \le n$ by Lemma \ref{cycleantichain}. Pairing off intervals with their complements and considering the maximum weight we can obtain with $2n-I-1$ intervals, it is enough to show
\begin{displaymath}
(1 + c) I \binom{n}{\frac{n+1}{2}} + \left(n-I \right) \binom{n}{\frac{n+1}{2}} + \left(n -I  -1 \right) \binom{n}{\frac{n+3}{2}} \le R = n \binom{n}{\frac{n+1}{2}} + n \binom{n}{\frac{n+3}{2}}.
\end{displaymath}
Simplifying, we get 
\begin{displaymath}
c I \binom{n}{\frac{n+1}{2}} \le \left (I+1 \right) \binom{n}{\frac{n+3}{2}}.
\end{displaymath}
Dividing through by $\binom{n}{\frac{n+1}{2}}$ we get,
\begin{displaymath}
c I \le \frac{n-1}{n+3} \left(I+1 \right).
\end{displaymath}
Since $c \le \frac{4}{n-5}$ the above inequality holds if $\frac{4}{n-5} \le \frac{n-1}{n+3}$ and it can be easily checked that this is true for $n \ge 11$.

We now establish the cases of equality. First let us notice that by Lemma \ref{mainlemma}, we have $\abs{\F} = \frac{R}{n}$ if and only if we have equality in \eqref{mainineq1} for each $\sigma$. However, we just saw that if $I > 0$, the inequality~\eqref{mainineq1} is never sharp when $n$ is large enough (for both the $n$ even case and $n$ odd case). Thus, we have $I = 0$ for every $\sigma$, and by Lemma \ref{lem9} $\F_0^\sigma=\h_{0,n,2}^\sigma$ or $\F_0^\sigma=\h_{x,n,2}^\sigma$ (depending on the parity of $n$). This implies that $\F^\sigma$ is 2-Sperner for every $\sigma$. If there is a chain of length three in $\F$, then clearly there is a $\sigma$ such that $\F^\sigma$ contains all three members of this chain, a contradiction. Therefore, $\F$ is $2$-Sperner and so the equality cases follow from Theorem \ref{kspint}.
\end{proof}

\section{Bollob\'{a}s and Greene-Katona-Kleitman-type inequalities}
\label{ButterflyBollobas}
In this section we will prove Theorem \ref{bollobas2}.  

\begin{proof}
Following Bollob\'as's proof \cite{bollobas1973sperner} of Theorem \ref{bollobas}, we will use the weight function
\begin{displaymath}
w(F,\sigma) = 
\begin{cases}
\frac{1}{\abs{F}},  &\mbox{if $F \in \F$ and $F$ is an interval in $\sigma$} \\
0, &\mbox{otherwise.}
\end{cases}
\end{displaymath}
On the one hand, we have
\begin{displaymath}
\sum_{F\in \F} \sum_\sigma w(F,\sigma) = \sum_{F\in \F} (\abs{F}-1)!(n-\abs{F})!.
\end{displaymath}
We will show 
\begin{displaymath}
\sum_\sigma \sum_{F\in \F} w(F,\sigma) \le 2(n-1)!.
\end{displaymath}
Fix a cyclic permutation $\sigma$.  As before, let $\I_\sigma$ be the collection of isolated intervals along $\sigma$.  Similarly, let $\M_\sigma$ be the collection of middle intervals along  $\sigma$. For a family $\A$ let the weight of the family be $w(\A)=\sum_{A\in \A}w(A)$. Then, we claim that the following inequality holds:
\begin{equation}
\label{desiredineq}
w(\F^\sigma) \le 2  + w(\M_\sigma) - w(\I_\sigma).
\end{equation}
Indeed, initially leave out all intervals in $\M_\sigma$ and $\I_\sigma$.  The remaining intervals may be partitioned into two antichains along $\sigma$, say $\A_1$ and $\A_2$.  Clearly $\A_1 \cup \I_\sigma$ is an antichain, as is $\A_2 \cup \I_\sigma$.   By the argument from \cite{bollobas1973sperner} we have $w(\A_1 \cup \I_\sigma) \le 1$ and $w(\A_2 \cup \I_\sigma) \le 1$.  Thus, summing we have
\begin{equation}
w(\A_1) + w(\A_2) + 2w(\I_\sigma) \le 2. 
\end{equation}
Rearranging and adding $w(\M_\sigma)$ to both sides yields \eqref{desiredineq}.  

Since the only possible middle intervals along a cyclic permutation are middle sets in $\F$ (that is, $\M_\sigma \subseteq \F_m^\sigma$), summing up \eqref{desiredineq} over all cyclic permutations $\sigma$, we get
\begin{equation}
\sum_\sigma \sum_{F\in \F} w(F,\sigma) \le 2(n-1)! + \sum_{F \in \F_m} \left(\frac{\alpha_F}{\abs{F}} - \frac{\beta_F}{\abs{F}} \right) - \sum_{F \not \in \F_m}\frac{\beta_F}{\abs{F}}.
\end{equation}
We have seen already by Lemma \ref{cestimate} that $\beta_F \ge \alpha_F$ for $F \in \F_m$, and the proof is complete.
\end{proof}

The proof of Theorem \ref{gkk2} uses the exact same idea but with the following weight function.
\begin{displaymath}
w(F,\sigma) =
\begin{cases}
\frac{n-\abs{F}+1}{\abs{F}}, &\mbox{if $F \in \F$, $\abs{F}\le \frac{n}{2}$ and $F$ is an interval in $\sigma$} \\
1, &\mbox{if $F \in \F$, $\abs{F}> \frac{n}{2}$ and $F$ is an interval in $\sigma$} \\
0, &\mbox{otherwise.}
\end{cases}
\end{displaymath}

This weight function was defined in \cite{greene1976extensions}, and it was shown there that the weight of an intersecting antichain is at most $n$. In particular, we obtain that $w(\A_2 \cup \I_\sigma) \le n$ and $w(\A_1 \cup \I_\sigma) \le n$.   Using this, the proof goes the same way as the proof of Theorem \ref{bollobas2}. 

\section{Results for general posets $P$}
\label{generalP}
%
%

In this section we prove Theorem \ref{oddcase}. Before we start the proof, we define the notion of a \emph{double~chain} introduced in \cite{burcsi2013method}. 

\begin{definition} [Double chain]
	Let $\emptyset=A_0 \subset A_1 \subset A_2 \subset \ldots \subset A_n = [n]$ be a maximal chain (so $\abs {A_i }= i$). The double chain associated to this chain is given by  
	\begin{displaymath}
		\D = \{ A_0, A_1, \ldots, A_n, M_1, M_2, \ldots, M_{n-1} \},
	\end{displaymath}
	where $ M_i = A_{i-1} \cup \{A_{i+1} \setminus A_i\}$. 
\end{definition}

We will now introduce the notion of a \emph{double chain-complement pair} which is the key ingredient of the proof.
\begin{definition} [Double chain-complement pair]
	Let $\D$ be a double chain. By taking the complements of the sets in $\D$ we get another double chain $\D'$. We refer to $\h = \D \cup \D'$ as a double chain-complement pair.
\end{definition}

In the rest of this section we shall work with the double chain-complement pair $\h_0 = \D_0 \cup \D'_0$ where $\D_0$ is defined by taking $A_i = [i]$; other double chain-complement pairs are related to it by permutation. Let $\pi$ be a permutation on $[n]$ and $F \subseteq [n]$, then $F^\pi$ denotes the set $\{\pi(a):a\in F\}$.  We define the double chain-complement pair $\h_0^\pi$ to be the collection $\{F^\pi: F \in \h_0\}$. We will use a weighted double counting argument on the pairs $(F,\pi)$ where $F \in \F$, $\pi$ is a permutation, and $F\in \h_0^\pi$.  Note that for two different permutations $\pi_1$ and $\pi_2$, the pairs $(F,\pi_1)$ and $(F,\pi_2)$ are considered distinct even if $\h_0^{\pi_1} = \h_0^{\pi_2}$.



Define a weight function $w(F,\pi)$ by
\begin{equation*}
	w(F,\pi) = \begin{cases}
		\binom{n}{\abs{F}}, &\text{if $F\in \F$, $F \not = [n]$ and $F \in \h_0^\pi$}\\
		4, &\text{if $F \in \F$, $F = [n]$ and $F \in \h_0^\pi$}\\
		0, &\text{otherwise.}
	\end{cases}
\end{equation*}

We want to compute $\sum_F \sum_{\pi} w(F,\pi) $ in two different ways. First let us fix a $F \in \F$ and determine for how many permutations $\pi$ we have $F \in \h_0^\pi$. If $F = [n]$ we know that for all $n!$ permutations, $F \in \h_0^\pi$. So let us assume $F \not = [n]$. Let $H_1, H_2, H_3, H_4$ be the four sets in $\h_0$ of size $\abs{F}$ (our assumption $n \ge 4$ ensures there are four distinct sets of this size).  The number of permutations $\pi$ such that a given $H_i$ (where $1 \le i \le 4$) is mapped to $F$ is $\abs{F}!(n-\abs{F})!$, since we can map the elements of $H_i$ to $F$ arbitrarily and the elements of $[n] \setminus H_i$ to $[n] \setminus F$ arbitrarily. So it follows that the number of permutations $\pi$ such that $F \in \h_0^\pi$ is $4\abs{F}!(n-\abs{F})!$. Thus, we have

\begin{equation}
	\label{eq:oneside}
	\sum_{F\in \F} \sum_{\pi} w(F,\pi) = 4 \abs{\F} n!.
\end{equation}

Now let us fix a permutation $\pi$. Our aim is to bound the total weight of the family of sets on $\h_0^\pi$ from $\F$.  We recall a lemma due to Burcsi and Nagy \cite{burcsi2013method}.

\begin{lemma} [Burcsi-Nagy \cite{burcsi2013method}]
	Let $P$ be a  poset. Any subset of size $\abs{P} + h(P) - 1$ of a double chain contains $P$ as a subposet.
\end{lemma}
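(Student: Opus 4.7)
I plan to prove the lemma by induction on the height $h = h(P)$. The base case $h = 1$ is immediate: $P$ is an antichain of size $\abs P$, and any $\abs P$ distinct sets of the double chain form a copy of $P$, since the subposet condition is vacuous on antichains.

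For the inductive step, assume $h \ge 2$ and let $L \subseteq P$ denote the set of elements of $P$ whose longest downward chain has length $h$. Put $P' = P \setminus L$; then $h(P') = h - 1$, $\abs{P'} = \abs P - \abs L$, and $\abs{P'} + h(P') - 1 = \abs{S} - (\abs L + 1)$, where $S$ is the given subset of the double chain $\D$. The plan is to remove $\abs L + 1$ sets from the ``top'' of $S$, call them $T$, apply the inductive hypothesis to obtain an embedding $\phi' : P' \hookrightarrow S \setminus T$, and extend $\phi'$ by injectively mapping $L$ into $T$ (using $\abs L$ of the $\abs L + 1$ sets in $T$). Since $\D$ has width $2$---its only antichains of size $2$ are pairs $\{A_i, M_i\}$---the subposet $S$ has at most two maximal elements, making the highest-level sets of $S$ natural candidates for $T$.

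The main obstacle lies in the selection of $T$ and the completion of the extension: for each $p \in L$ one must choose $\phi(p) \in T$ such that $\phi(p) \supseteq \phi'(q)$ in $\D$ for every $q \in P'$ with $q < p$. This can fail because of the two non-trivial incomparabilities in $\D$, namely $A_i \parallel M_i$ (same-level) and $M_i \parallel M_{i+1}$ (consecutive $M$-sets). For instance, a top set $M_\ell \in T$ fails to contain $M_{\ell - 1}$, so if the inductive embedding places a lower element of $P$ at $M_{\ell - 1}$, the extension is blocked. To handle this, I would select $T$ by preferring $A_i$ over $M_i$ at tied levels, and would exploit the extra slot in $T$ (of size $\abs L + 1$ versus the $\abs L$ images to place) via a Hall-style matching argument between $L$ and $T$. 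The trickiest case, which I expect to drive most of the combinatorial detail, is when the top two levels of $S$ each contain only the $M$-set: then the problematic pair $\{M_{\ell - 1}, M_\ell\}$ unavoidably spans $T$ and $S \setminus T$, and one must use the flexibility of the inductive step---possibly re-choosing $\phi'$ to avoid occupying $M_{\ell - 1}$ with an element comparable to $p$---to route around the block.
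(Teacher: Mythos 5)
This lemma is quoted in the paper from Burcsi and Nagy without proof, so your attempt has to stand on its own. The inductive framework you set up (induction on $h(P)$, peeling off the set $L$ of maximum-height elements together with one spare slot) is a reasonable skeleton, and you correctly locate the obstruction in the incomparable pairs $A_i \parallel M_i$ and $M_i \parallel M_{i+1}$ (note, incidentally, that your earlier sentence claiming the only $2$-element antichains are the pairs $\{A_i,M_i\}$ is false, as you yourself observe two sentences later). The problem is that the entire combinatorial content of the lemma lies in precisely the step you defer: the choice of $T$ and the completion of the extension are described as things you ``would'' do, not things you do, so what you have is a plan rather than a proof.

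Moreover, the plan as stated does not go through. First, the tie-breaking rule ``prefer $A_i$ over $M_i$'' for membership in $T$ fails on a concrete instance: take $P=V$ and $S=\{M_1,A_1,M_2,A_2\}=\{\{2\},\{1\},\{1,3\},\{1,2\}\}$ in $\D_0$. Then $T=\{A_1,M_2,A_2\}$ and the bottom element is $M_1=\{2\}$, but only $A_2$ among the three elements of $T$ contains $M_1$, so the two maximal elements of $V$ cannot both be placed; the correct move is the opposite one (discard $M_1$ into $T$, keep $A_1$ below, and use $A_1\subset M_2$, $A_1\subset A_2$). Second, and more fundamentally, the escape route you invoke for the hardest case --- ``re-choosing $\phi'$ to avoid occupying $M_{\ell-1}$'' --- is not available under your induction: your inductive hypothesis is the bare statement of the lemma, which only asserts that \emph{some} embedding of $P'$ into $S\setminus T$ exists and gives no control over where its image sits. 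To carry out that step you must strengthen the statement being proved (for instance, that the embedding can be chosen so that the top of its image avoids a prescribed $M$-type interval, or an equivalent control on the boundary between $S\setminus T$ and $T$), and verifying that strengthened statement --- in particular in the configuration where the top element of $S\setminus T$ is some $M_i$ with both $A_i$ and $M_{i+1}$ lying in $T$, which disqualifies two of the $\abs{L}+1$ slots while only one spare is available --- is exactly the missing work.
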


Since a $P$-free family has at most $\abs{P} + h(P) - 2$  sets on a double chain, it follows that we can have at most $2(\abs{P} + h(P) - 2)$ sets in $\F \cap \h_0^\pi$.  Since $n$ is odd, there are $8$ sets in $\h_0^\pi$ of the largest weight, $\binom{n}{\floor{\frac{n}{2}}+1}$, and $8$ sets of second largest weight, $\binom{n}{\floor{\frac{n}{2}}+2}$, and so on.  The $8$ sets of $\h_0^\pi$ of the same weight $\binom{n}{\floor{\frac{n}{2}}+i}$ (where $i \ge 1$) consist of $4$ sets and their respective complements.  Thus, at most $4$ of them can belong to our family $\F$ (since $\F$ is intersecting). It is easily seen that the largest weight we can obtain comes from taking 4 sets of the largest weight in $\h_0^\pi$, 4 sets of the second largest weight in $\h_0^\pi$, and so on. It follows that the total weight of sets in $\F \cap \h_0^{\pi}$ is at most $ \sum_{i = 1}^{\frac{2(\abs{P} + h(P) - 2)}{4}} 4 \binom{n}{\floor{\frac{n}{2}}+i}$.  So we have

\begin{equation}
	\label{eq:otherside}
	\sum_{\pi} \sum_{F\in\F}  w(F, \pi) \le n! \left ( \sum_{i = 1}^{\frac{\abs{P} + h(P)}{2}-1} 4 \binom{n}{\floor{\frac{n}{2}}+i} \right).
\end{equation}
Combining \eqref{eq:oneside} and \eqref{eq:otherside}, we have the desired bound.

\section{Intersecting $k$-Sperner families}
\label{sectionksp}

%
%
%

The aim of this section is to prove Theorem \ref{kspint}. 

\begin{lemma}
	\label{cyclelemma}
	Let $\G$ be an intersecting $k$-Sperner collection of intervals along a cyclic permutation $\sigma$, then
	\begin{equation}
		\label{cyclelemmaeq}
		w(\G)=\sum_{G \in \G} \binom{n}{\abs{G}} \le n \Sigma_I(n,k).
	\end{equation}
	Assume $k < \frac{n}{2}$, then equality holds in \eqref{cyclelemmaeq} if and only if:
	\begin{itemize}
		\item $n$ is odd and $\G = \h_{0,n,k}^\sigma$;
		\item $n$ is even, $k=1$ and $\G = \h_{0,n,1}^\sigma$ or $\G = \h_{x,n,1}^\sigma$ for some $x \in [n]$;
		\item $n$ is even, $k>1$ and $\G=\h_{x,n,k}^\sigma$ for some $x \in [n]$. 
	\end{itemize}
\end{lemma}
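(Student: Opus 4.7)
The plan is to reduce the weighted sum to a linear-programming bound on the level profile $m_j := |\{G \in \G : |G| = j\}|$. Using the canonical chain decomposition $\C_1, \ldots, \C_n$ from Section \ref{Sectioncycle}, the $k$-Sperner hypothesis yields $|\G \cap \C_i| \le k$ for each $i$ and hence $\sum_j m_j \le nk$. The intersecting hypothesis applied to the complementary interval pairs $\{A_i^j, A_{i+j}^{n-j}\}$ forces $m_j + m_{n-j} \le n$ for $j < n/2$, together with $m_{n/2} \le n/2$ when $n$ is even. Setting $s_j := m_j + m_{n-j}$ for $j < n/2$ and, when $n$ is even, $s_{n/2} := m_{n/2}$, the symmetry $\binom{n}{j} = \binom{n}{n-j}$ recasts the weight as $\sum_{j \le n/2} s_j \binom{n}{j}$, to be maximised subject to $s_j \le n$ (or $s_{n/2} \le n/2$) and $\sum_j s_j \le nk$.

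For $n$ odd this LP is already tight. Since $\binom{n}{j}$ is maximised at $j = (n-1)/2$ and decreases strictly as $j$ moves toward $1$, greedy saturation gives $s_{(n-1)/2} = s_{(n-3)/2} = \cdots = s_{(n+1)/2-k} = n$, exhausting the budget and producing $n\Sigma_I(n,k)$ by symmetry, which is realised by $\h_{0,n,k}^\sigma$. For $n$ even the same greedy exceeds the target by $k\binom{n}{n/2+k}$, because it assigns $s_{n/2-k} = n/2$ whereas the extremum requires $n/2 - k$.

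The even case is closed by a \emph{global} $k$-Sperner argument, not captured by the per-$\C_i$ bound. Suppose the middle levels are saturated, i.e.\ $m_{n/2+i} = n$ for $1 \le i \le k-1$. Then for any size-$n/2$ interval $X_{i_0} \in \G$ and size-$(n/2+k)$ interval $Y_j \in \G$ with $X_{i_0} \subset Y_j$ (equivalently $j \in \{i_0 - k, \ldots, i_0\}$), the chain $X_{i_0} \subset B_1 \subset \cdots \subset B_{k-1} \subset Y_j$ through the saturated middle is a forbidden $(k+1)$-chain. Writing $S, T \subseteq [n]$ for the starts of the size-$n/2$ and size-$(n/2+k)$ intervals in $\G$, this yields $T \cap \bigcup_{t=0}^k (S - t) = \emptyset$; with $S$ a consecutive block (the configuration minimising the excluded set), $|T| \le n - |S| - k$. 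Maximising $|S|\binom{n}{n/2} + (n - |S| - k)\binom{n}{n/2+k}$ over $|S| \le n/2$ is monotone increasing, so the extremum is $|S| = n/2$, $|T| = n/2 - k$, recovering $n\Sigma_I(n,k)$. A separate case analysis shows that configurations with some middle level unsaturated cannot compensate the weight loss there and so cannot exceed this bound.

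The equality cases follow from the tightness conditions. For $n$ odd, the intersecting bound $m_j \le j$ at small levels $j < n/2$ forces each saturated pair $s_j = n$ to be realised on the large side, giving $\G = \h_{0,n,k}^\sigma$. For $n$ even and $k > 1$, the sharpened $(k+1)$-chain constraint forces $|S| = n/2$ (a block of starts of intervals through a common element $x$) and $|T| = n/2 - k$ (starts of intervals avoiding $x$), giving $\G = \h_{x,n,k}^\sigma$. For $n$ even and $k = 1$ the chain constraint is vacuous, so both $\h_{0,n,1}^\sigma$ and $\h_{x,n,1}^\sigma$ realise the bound. The principal obstacle in this proof is the $n$-even case, where the per-chain $k$-Sperner bound is insufficient and one must carefully account for $(k+1)$-chains that straddle multiple canonical chains $\C_i$.
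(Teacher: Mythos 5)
Your reduction to the level--profile LP (complement pairing giving $s_j\le n$ and $s_{n/2}\le n/2$, the canonical chain decomposition giving $\sum_j s_j\le nk$) is sound, and it does settle the odd case. The genuine gap is in the even case with $k>1$, exactly where you locate the difficulty. Your $(k+1)$-chain exclusion between levels $\frac{n}{2}$ and $\frac{n}{2}+k$ is valid only under the hypothesis that every intermediate level $\frac{n}{2}+1,\dots,\frac{n}{2}+k-1$ is completely full, and the sentence ``a separate case analysis shows that configurations with some middle level unsaturated cannot compensate the weight loss'' is not a routine verification: the naive accounting fails. If level $\frac{n}{2}+i$ is missing $d$ intervals, the LP value drops only by $d\bigl(\binom{n}{\frac{n}{2}+i}-\binom{n}{\frac{n}{2}+k}\bigr)$, which is of order $d\cdot\frac{k^2}{n}\binom{n}{\frac{n}{2}}$, whereas the excess you must eliminate is $k\binom{n}{\frac{n}{2}+k}=\Theta\bigl(k\binom{n}{\frac{n}{2}}\bigr)$; so the LP alone rules nothing out until $d$ is of order $n/k$, and with even one missing middle interval your chain argument no longer applies verbatim. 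You would need a quantitative version showing that most forbidden pairs $X\subset Y$ still admit a surviving chain through partially depleted middle levels. That analysis is the real content of the lemma and is absent.

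The paper closes the gap by a different device, which I suggest you adopt: Mirsky's decomposition of $\G$ into antichains $\G_1,\dots,\G_k$ (grouping by the length of the longest chain below each interval). If no $\G_i$ is a full level, then $\abs{\G_i}\le n-1$ for every $i$ by Lemma \ref{cycleantichain}, so the budget constraint sharpens to $\sum_j s_j\le kn-k$; your LP then yields exactly $n\Sigma_I(n,k)$, and equality forces each $\G_i$ to be pair-contiguous (Lemma \ref{2levels}), whence $\G=\h_{x,n,k}^\sigma$ after using Lemma \ref{cycleantichain_atmostnhalf} to exclude pairs below level $\frac{n}{2}$. If some $\G_s$ is a full level $\LL_t^\sigma$ with $t>\frac{n}{2}$, a direct two-case computation ($t>\frac{n}{2}+s-1$ versus $t\le\frac{n}{2}+s-1$) shows the weight is strictly below $n\Sigma_I(n,k)$. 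The key point is that the correct dichotomy is ``is some Mirsky antichain a full level?''---which genuinely tightens the interval count---rather than your ``is some level of $\G$ full?'', which does not.
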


\begin{proof}
	First, fix $k$ and suppose that $n$ is odd.  	Following an argument of Mirsky \cite{mirsky}, $\G$ can be decomposed into $k$ antichains in the following way.  For $1 \le i \le k$ set
	\begin{displaymath}
		\G_{i} = \{G:G\in \G \mbox{ and the longest chain in $\G$ with maximal element $G$ has length $i$} \}.
	\end{displaymath}
	By Lemma \ref{cycleantichain} each $\G_i$ can have size at most $n$ and so we have $\abs{\G} \le kn$.  For any interval $G$ along $\sigma$, it is easy to see that $[n]\setminus G$ is also an interval along $\sigma$ and has size $n-\abs{G}$.  Since our family is intersecting, by pairing off each $G$ with $[n]\setminus G$, we see that $\G$ contains at most $n$ intervals of size $\floor{\frac{n}{2}}$ or $\floor{\frac{n}{2}}+1$ and at most $n$ intervals of size $\floor{\frac{n}{2}}-1$ or $\floor{\frac{n}{2}}+2$ and so on.  Thus, the bound
	\begin{displaymath}
		\sum_{G \in \G} \binom{n}{\abs{G}}  \le n \Bigg(\binom{n}{\floor{\frac{n}{2}}+1} + \binom{n}{\floor{\frac{n}{2}}+2} + \dots + \binom{n}{\floor{\frac{n}{2}}+k}\Bigg) = n \Sigma_I(n,k)
	\end{displaymath}
	is immediate. Assume now that $\G$ attains this weight and $k \le \frac{n}{2}$, then $\G$ must contain $n$ sets from each of $\LL_{\floor{\frac{n}{2}}}^\sigma \cup \LL_{\floor{\frac{n}{2}}+1}^\sigma$,  $\LL_{\floor{\frac{n}{2}}-1}^\sigma \cup \LL_{\floor{\frac{n}{2}}+2}^\sigma$, \dots, $\LL_{\floor{\frac{n}{2}}-k+1}^\sigma \cup \LL_{\floor{\frac{n}{2}}+k}^\sigma$.  In particular, we must have $\abs{\G} = kn$.  
	
	Observe that each $\G_i$ is an antichain and, since $\abs{\G} = kn$, we have $\abs{\G_i} = n$ for all $i$.  Then, Lemma~\ref{cycleantichain} implies that each $\G_i$ is equal to a level  $\LL_j^\sigma$ for some $j$.  It follows that $\G_i$ must consist of intervals of size at least $\floor{\frac{n}{2}}+1$.  Thus, assuming $\G$ is of maximal weight, we have
	\begin{displaymath}
		\G_i = \LL_{\floor{\frac{n}{2}}+i}^\sigma
	\end{displaymath}
	for each $i$ and so 
	\begin{displaymath}
		\G = \LL_{\floor{\frac{n}{2}}+1}^\sigma \cup \LL_{\floor{\frac{n}{2}}+2}^\sigma \cup \dots \cup \LL_{\floor{\frac{n}{2}}+k}^\sigma = \h_{0,n,k}^\sigma.
	\end{displaymath}
	
	Next, we consider the case when $n$ is even and $k=1$. By Lemma \ref{cycleantichain}, if $\abs{\G} = n$, then $\G$ is equal to a level $\LL_i^\sigma$ for some $i$. By the intersection property we have $i \ge \frac{n}{2}+1$ and so the weight of the family is bounded by $n \binom{n}{\frac{n}{2}+1}$ with equality only if $\G = \LL_{\frac{n}{2}+1}^\sigma$.  If $\abs{\G} \le n-1$ then, since we can take at most $\frac{n}{2}$ intervals of size $\frac{n}{2}$, the weight is bounded by $\frac{n}{2} \binom{n}{\frac{n}{2}} + (\frac{n}{2}-1)\binom{n}{\frac{n}{2}+1}$. This bound can only be attained if $\abs{\G}=n-1$, and it follows by Lemma \ref{2levels} that $G$ is pair-contiguous which, in the case $k=1$, implies $\G = \h_{x,n,1}^\sigma$ for some $x \in [n]$.  Since $\frac{n}{2} \binom{n}{\frac{n}{2}} + (\frac{n}{2}-1)\binom{n}{\frac{n}{2}+1} = n \binom{n}{\frac{n}{2}+1}$, both the $\abs{\G} = n-1$ case and the $\abs{\G} = n$ case yield optimal configurations.
	
		Finally, we consider the case when $n$ is even and $k>1$. Suppose first that for all $i,j$, $\G_i\neq \LL_j^\sigma$ (i.e. none of $\G_1, \dots, \G_k$ are equal to levels).  Then, by Lemma \ref{cycleantichain} we have $\abs{\G_i} \le n-1$ for all $i$.  We have $\abs{\G} \le kn-k$, and we will see that if $\G$ has maximal weight, then in fact $\abs{\G} \ge kn-k$.   Indeed, by pairing off intervals with their complements, we can have at most $\frac{n}{2}$ intervals of size $\frac{n}{2}$ in $\G$, at most $n$ intervals of size $\frac{n}{2}-1$ or $\frac{n}{2}+1$ in $\G$ and so on.  Thus, the total weight we can achieve with $kn-k$ intervals is bounded by 
	\begin{displaymath}
		\frac{n}{2}\binom{n}{\frac{n}{2}} + n \binom{n}{\frac{n}{2}+1} + \dots + n \binom{n}{\frac{n}{2} + k - 1} + \left (\frac{n}{2}-k \right ) \binom{n}{\frac{n}{2}+k} = n \Sigma_I(n,k),
	\end{displaymath}
	and if we have fewer than $kn-k$ intervals then the weight will be strictly less than this. This proves the upper bound, so let us assume that $k<\frac{n}{2}$ and consider the extremal families. It follows that we may assume $\abs{\G} = kn-k$ and $\abs{\G_i} = n-1$ for all $1\le i \le k$.  By Lemma \ref{2levels} each $\G_i$ is pair-contiguous on $\LL^\sigma_j\cup \LL^\sigma_{j+1}$ for some $j$.  If $j<\frac{n}{2}$, then the corresponding $\G_i$ would have size at most $\frac{n}{2}$  by Lemma \ref{cycleantichain_atmostnhalf} (contradicting $\abs{\G_i} = n-1$).  Thus, we may assume that $j \ge \frac{n}{2}$. Moreover, we have seen that the only way to attain the maximal weight ($n \Sigma_I(n,k)$) is by taking $\frac{n}{2}$ intervals of weight $\binom{n}{\frac{n}{2}}$, $n$ intervals of weight $\binom{n}{\frac{n}{2}+t}$ for $1 \le t \le k-1$, and $\frac{n}{2}- k$ intervals of weight $\binom{n}{\frac{n}{2}+k}$.  It follows that $\cup_{i=1}^k \G_i \subset \cup_{i=1}^{k+1} \LL_{\frac{n}{2}+i-1}^\sigma$ where $\G_i$ is a pair-contiguous subset of $\LL_{\frac{n}{2}+i-1}^\sigma \cup \LL_{\frac{n}{2}+i}^\sigma$ and $\G_1$ contains $\frac{n}{2}$ intervals of size $\frac{n}{2}$ and $\frac{n}{2}-1$ intervals of size $\frac{n}{2}+1$.  In turn, the family $\G_2$ is forced to be the (unique) pair-contiguous family consisting of the remaining $\frac{n}{2}+1$ intervals of size $\frac{n}{2}+1$ and $\frac{n}{2}-2$ intervals of size $\frac{n}{2}+2$.  The structures of $\G_3$ through $\G_k$ are forced in a similar way, and we will obtain the required equality $\G = \h_{x,n,k}^\sigma$ for some $x \in [n]$.  Indeed, $\G_1$ being pair-contiguous with $\frac{n}{2}$ intervals of size $\frac{n}{2}$ has the property that there is an element, $x$, such that $x$ belongs to every lower interval of $\G_1$ and none of the upper intervals of $\G_1$.  Similarly, $x$ belongs to every lower interval of $\G_2$ and none of the upper intervals of $\G_2$. Continuing in this way we obtain $\G_k$ where the lower intervals contain $x$ and the upper intervals do not, and it follows that the union of the $\G_i$'s, that is $\G$, is precisely equal to $\h_{x,n,k}^\sigma$.


The remaining case is that $\G_i=\LL_j^\sigma$ for some $i,j$. We will show that $\G$ cannot have maximal weight and	this will complete the proof since we have already classified the extremal families in the case that no $\G_i$ is equal to a level $\LL_j^\sigma$.   Suppose, by way of contradiction,  that $s$ is the smallest integer such that $\G_s$ is equal to a level, say $\LL_t^\sigma$ ($t>\frac{n}{2}$).   The weight of $\G_s \cup \G_{s+1} \cup \dots \cup \G_k$ is clearly bounded by 
	\begin{displaymath}
		n \binom{n}{t} + n \binom{n}{t+1} + \dots +n \binom{n}{t + k - s}.
	\end{displaymath}
	If $t > \frac{n}{2} + s-1$, then, by the previous case (no full levels), the weight of $\G_1 \cup \dots \G_{s-1}$  is maximized by taking
	\begin{displaymath}
		\G_1 \cup \dots \cup \G_{s-1} = \h_{x,n,s-1}^\sigma,
	\end{displaymath}
	for some $x \in [n]$.  The weight of $\G_1 \cup \dots \cup \G_{s-1}$ is 
	\begin{displaymath}
		w(\G_1 \cup \dots \cup \G_{s-1}) = \frac{n}{2} \binom{n}{\frac{n}{2}} + n \binom{n}{\frac{n}{2}+1} + \dots + n \binom{n}{\frac{n}{2}+s-2} + \left (\frac{n}{2} - (s-1) \right) \binom{n}{\frac{n}{2}+s-1},
	\end{displaymath}
	and it follows that the total weight of $\G$ is at most
	\begin{multline}
		\frac{n}{2} \binom{n}{\frac{n}{2}} + n \binom{n}{\frac{n}{2}+1} + \dots + n \binom{n}{\frac{n}{2}+s-2} \\ +\left(\frac{n}{2} - (s-1) \right) \binom{n}{\frac{n}{2}+s-1} + n \binom{n}{\frac{n}{2}+s} + n \binom{n}{\frac{n}{2}+s+1} + \dots +n \binom{n}{\frac{n}{2}+ k}.
	\end{multline}
	Subtracting $w(\G)$ from $w(\h_{x,n,k}^\sigma)$ we obtain 
	\begin{multline}
		w(\h_{x,n,k}^\sigma)-w(\G) \ge \left(\frac{n}{2}+s - 1 \right) \binom{n}{\frac{n}{2}+s-1} - \left(\frac{n}{2}+ k\right) \binom{n}{\frac{n}{2}+k}\\  = n \Bigg(\binom{n-1}{\frac{n}{2}+s-2}- \binom{n-1}{\frac{n}{2} + k -1}\Bigg) > 0,
	\end{multline}
	which implies $w(\G)$ is not maximal.
	
	Next, consider the case when $t \le \frac{n}{2} +s -1$.  By pairing off intervals with their complements along $\sigma$, it follows that 
	\begin{displaymath}
		w(\G_1 \cup \dots \cup \G_{s-1}) \le \frac{n}{2} \binom{n}{\frac{n}{2}} + n \binom{n}{\frac{n}{2}+1} + \dots + n \binom{n}{ t-1}.
	\end{displaymath}
	Thus, the whole weight is 
	\begin{displaymath}
		w(\G) \le \frac{n}{2} \binom{n}{\frac{n}{2}} + n \binom{n}{\frac{n}{2}+1} + \dots + n \binom{n}{t-1} + n \binom{n}{t} + \dots +n \binom{n}{t +k - s},
	\end{displaymath}
	but $t-s \le \frac{n}{2}-1$ so
	\begin{displaymath}
		w(\G) \le \frac{n}{2} \binom{n}{\frac{n}{2}} + n \binom{n}{\frac{n}{2}+1} + \dots  +n \binom{n}{\frac{n}{2}-1 +k} < w(\h_{x,n,k}^\sigma).
	\end{displaymath}
	Thus, we may conclude that we never have $\G_i=\LL_j^\sigma$ for any $i,j$.  It follows that the only possible equality case is $\G = \h_{x,n,k}^\sigma$ for some $x \in [n]$.
\end{proof}

\begin{proof}[Proof of Theorem \ref{kspint}]

	By Lemma \ref{cyclelemma}, we have that for every $\sigma$,
	\begin{equation}
		\label{useforequality}
		\sum_{F \in \F^\sigma} w(F,\sigma) \le n \Sigma_I(n,k).
	\end{equation}
	By the double counting outlined in Section \ref{Sectioncycle}, it is immediate that $\abs{\F} \le \Sigma_I(n,k)$.  Thus, it remains to determine the possible extremal families.  If $\F$ is extremal, then for every $\sigma$ we have equality in \eqref{useforequality} and so we are in an equality case given by Lemma \ref{cyclelemma}. Let us consider the case $k<n/2$.

	Assume first that $n$ is odd, then for every $\sigma$ we have that $\F^\sigma$ is equal to $\h_{0,n,k}^\sigma$.  In this case, it is immediate that $\F = \h_{0,n,k}$.

	Suppose now that $n$ is even and $k=1$.  There are two cases: either there exists a $\sigma$ for which $\F^\sigma = \h_{0,n,1}^\sigma$ or there does not.  Assume that we have $\F^\sigma =\h_{0,n,1}^\sigma$, and form a new cyclic permutation $\sigma'$ by transposing two adjacent elements of $\sigma$.  Observe that $\F^{\sigma'}$ still contains $n-2$ of the same intervals on level $\frac{n}{2}+1$ (namely, those without exactly one of the transposed elements). Now, configurations of the form $\h_{x,n,1}^\sigma$, $x \in [n]$, have $\frac{n}{2} -1$ intervals of size $\frac{n}{2}+1$. Thus, we have that $\F^{\sigma'}$ must have the form $\h_{0,n,1}^{\sigma'}$.  Since every permutation can be generated by transpositions of consecutive elements, it follows that for all $\sigma$, $\F^\sigma = \h_{0,n,1}^\sigma$ and so $\F = \h_{0,n,1}$.  Thus, we will assume that for all $\sigma$ we have $\F^\sigma =  \h_{x,n,1}^\sigma$, $x \in [n]$.
	
	If $n$ is even and $k >1$ and $\F^\sigma =\h_{0,n,k}^\sigma$ for some $\sigma$, then in a completely analogous way to the above $k=1$ case we can deduce that $\F = \h_{0,n,k}$. However, for $k>1$ we have $\abs{\h_{0,n,k}} < \abs{\h_{x,n,k}}$.  Indeed, simply observe
	
	\begin{align*}
		\abs{\h_{x,n,k}} - \abs{\h_{0,n,k}} &= \binom{n-1}{\frac{n}{2}-1}+ \binom{n-1}{\frac{n}{2}+k} - \binom{n}{\frac{n}{2}+k} \\
		&= \binom{n-1}{\frac{n}{2}-1} - \binom{n-1}{\frac{n}{2}+k-1} \\
		&> 0.
	\end{align*}
	Thus, we may rule out the $\h_{0,n,k}$ case for $k > 1$ and conclude that $\F^\sigma \not =\h_{0,n,k}^\sigma$ for any $\sigma$.

	So, finally, we may suppose that $n$ is even and $k \ge 1$ and that for every $\sigma$, we have $\F^\sigma = \h_{x,n,k}^\sigma$ for some $x \in [n]$. We want to show that $\F = \h_{x,n,k}$ for some $x$.  Each cyclic permutation contains $\frac{n}{2}$ intervals of size $\frac{n}{2}$ and $n$ intervals of size $\frac{n}{2}+i$ for $1 \le i \le k-1$ and $\frac{n}{2} -k$ intervals of size $\frac{n}{2}+k$. Therefore all the sets of $[n] \choose \frac{n}{2}+i$ for $1 \le i \le k-1$ are in $\F$. It only remains to show that $\F$ contains all the sets of size $\frac{n}{2}$ that contain a fixed element and all the sets of size $\frac{n}{2}+k$ that do not contain that fixed element. 
	
	We supposed that for each $\sigma$, $\F^{\sigma}$ contains all of the $\frac{n}{2}$-element intervals containing some $x$ and all the $(\frac{n}{2}+k)$-element intervals not containing that $x$. However, the $x$'s corresponding to different $\sigma$'s may be different. We claim that this is impossible; i.e., that $\F$ contains all of the $\frac{n}{2}$-element intervals containing some fixed element $x$ and all the $(\frac{n}{2}+k)$-element intervals not containing that $x$. First, let us fix an arbitrary cyclic permutation $\sigma$. Since we assumed that there is an element $x$ such that $\F^{\sigma}$ contains all of the $\frac{n}{2}$-element intervals containing $x$, notice that we have two $\frac{n}{2}$-element sets $A$ and $B$ in $\F$ that are intervals along this $\sigma$ such that $A \cap B = \{x\}$. We want to show that every $\frac{n}{2}$-element set in $\F$ contains this $x$. Suppose, by contradiction, that there exists an $\frac{n}{2}$-element set $C$ (in $\F$) not containing $x$. Observe that $\abs{[n] \setminus (A \cup B)} = 1$, so let $[n] \setminus (A \cup B) = \{y\}$.

	If $C$ contains $y$, then notice that we can find a cyclic permutation $\sigma'$ where $A$, $B$ and $C$ are intervals (since we can easily arrange the sets $A, B, C$ as intervals along a cycle). Since $A, B$ and $C$ do not have a common element, this is a contradiction to our assumption that all the $\frac{n}{2}$-element intervals along every cyclic permutation contain a fixed element. However, if $C$ does not contain $y$, we can find a cyclic permutation $\sigma''$ where $A$, $B$ and $C \cup \{y\}$ are intervals. Along this $\sigma''$, since we have two intervals (namely, $A$ and $B$) that intersect just in $x$, all the $\frac{n}{2}$-element intervals in $\F^{\sigma''}$ must also contain $x$ and all the $(\frac{n}{2}+k)$-element intervals in $\F^{\sigma''}$ do not contain $x$. In particular, there is an $(\frac{n}{2}+k)$-element interval $U$ which contains $C \cup \{y\}$ (this is because the interval $C \cup \{y\}$ doesn't contain $x$). Now, since all the intervals along $\sigma''$ of sizes $\frac{n}{2} +i$, $2 \le i \le k-1$ are in $\F$, it is easy to find a $(k+1)$-chain in $\F$ starting with $C$, then continuing with $C \cup \{y\}$ and ending with $U$, a contradiction to our assumption that $\F$ is $k$-Sperner. Therefore, our claim is proved. 
	
	By a standard double counting argument on pairs $(F, \sigma)$ where $F \in \F$ and $F$ is an interval along $\sigma$, we can see that $\F$ contains exactly $\binom{n-1}{\frac{n}{2}-1}$ sets of size $\frac{n}{2}$, and by the previous paragraph all the $\frac{n}{2}$-sets in $\F$ must contain a fixed element. Therefore, $\F$ contains every $\frac{n}{2}$-element set containing a fixed element and nothing else. But this means $\F$ cannot contain any set of size $\frac{n}{2}+k$ containing $x$ because otherwise we will have a $(k+1)$-chain in $\F$, a contradiction. But by the same double counting argument, we can see that $\F$ contains $\binom{n-1}{\frac{n}{2}+k}$ sets of size $\frac{n}{2}+k$, and all these sets must not contain $x$. This shows that $\F = \h_{x,n,k}$, as desired.
	
It only remains to establish the case of equality if $k=\frac{n+1}{2}$. Note that the upper bound is equal to $2^{n-1}$ in this case. Thus, an extremal family $\F$ contains either $A$ or the complement of $A$ for every $A\subset [n]$. If there is $F\in \F$ with $\abs{F} <\frac{n}{2}$, then there must be a set $F'\supset F$ with $F'\not\in \F$ by the $k$-Sperner property. Then the complement of $F'$ has to be in $\F$, but it is disjoint from $F$, a contradiction.
\end{proof}

\section*{Acknowledgements}
We would like to thank the two anonymous referees for their careful reading of the manuscript and valuable comments.  The research of all three authors was partially supported by the National Research, Development and Innovation Office NKFIH, grant K116769, and the research of the first author was also partially supported by the J\'anos Bolyai Research Scholarship of the Hungarian Academy of Sciences.

\bibliography{IntersectingP}

\end{document}